\theoremstyle{plain}
\newtheorem{thm}{Theorem}[section]
\newtheorem{prop}[thm]{Proposition}
\newtheorem{lem}[thm]{Lemma}
\newtheorem{cor}[thm]{Corollary}
\theoremstyle{definition}
\newtheorem{que}[thm]{Question}
\newtheorem{defi}[thm]{Definition}
\newtheorem{rem}[thm]{Remark}
\newcommand{\alg}{\textnormal{alg}}
\newcommand{\an}{\textnormal{an}}
\newcommand{\N}{{\mathbb N}}
\newcommand{\Z}{{\mathbb Z}}
\newcommand{\Q}{{\mathbb Q}}
\newcommand{\R}{{\mathbb R}}
\newcommand{\C}{{\mathbb C}}
\newcommand{\cB}{{\mathcal B}}
\newcommand{\cC}{{\mathcal C}}
\newcommand{\cD}{{\mathcal D}}
\newcommand{\cF}{{\mathcal F}}
\newcommand{\cL}{{\mathcal L}}
\newcommand{\cS}{{\mathcal S}}
\newcommand{\bbN}{{\mathbb N}}
\newcommand{\bbR}{{\mathbb R}}
\DeclareMathOperator{\poly}{poly}
\DeclareMathOperator{\rpf}{rPfaff}
\DeclareMathOperator{\rpfaff}{\R_{\rpf}}
\DeclareMathOperator{\pf}{Pfaff}
\DeclareMathOperator{\pfaff}{\R_{\pf}}
\DeclareMathOperator{\semialgebraic}{\R_{\alg}}
\DeclareMathOperator{\ranalytic}{\R_{\an}}
\newcommand{\format}{{O_{\cF}(1)}}
\newcommand{\rformat}{{O_{\cF,r}(1)}}
\newcommand{\degree}{{\poly_{\cF}(D)}}
\newcommand{\rdegree}{{\poly_{\cF,r}(D)}}
\newcommand{\grdegree}{{\poly_{\cF}(D,r)}}
\newcommand{\pderivative}[2]{\frac{\partial#1}{\partial#2}}
\def\so{\raisebox{.5ex}{\scalebox{0.6}{\#}}\kern-.02em{}o}
\def\s{\raisebox{.5ex}{\scalebox{0.6}{\#}}}
\newcommand\bigsupset[1][1.7]{%
   \mathrel{\vcenter{\hbox{\scalebox{#1}{$\supset$}}}}}
\title[Sharply o-minimal structures and sharp cellular decomposition]{Sharply o-minimal structures and sharp cellular decomposition}
\author{Gal Binyamini, Dmitry Novikov, Benny Zak}
\date{\today}
\begin{document}

\begin{abstract}
Sharply o-minimal structures (denoted \so-minimal) are a strict subclass of the o-minimal structures, aimed at capturing some finer features of structures arising from algebraic geometry and Hodge theory. Sharp o-minimality associates to each definable set a pair of integers known as \emph{format} and \emph{degree}, similar to the ambient dimension and degree in the algebraic case; gives bounds on the growth of these quantities under the logical operations; and allows one to control the geometric complexity of a set in terms of its format and degree. These axioms have significant implications on arithmetic properties of definable sets -- for example, \so-minimality was recently used by the authors to settle Wilkie's conjecture on rational points in $\R_{\textnormal{exp}}$-definable sets.

In this paper, we develop some basic theory of sharply o-minimal structures. We introduce the notions of reduction and equivalence on the class of \so-minimal structures. We give three variants of the definition of \so-minimality, of increasing strength, and show that they all agree up to reduction. We also consider the problem of \say{sharp cellular decomposition}, i.e., cellular decomposition with good control on the number of cells and their formats and degrees. We show that every \so-minimal structure can be reduced to one admitting sharp cellular decomposition. We use this to prove bounds on the Betti numbers of definable sets in terms of format and degree. 
\end{abstract}
\maketitle

\section{Introduction}
\subsection{Notation}
For positive integers $m\geq n$, let $\pi^{m}_{n}:\R^{m}\to\R^{n}$ be the standard projection to the first $n$ coordinates. Usually, $m$ is clear from the context and is omitted from the notation. We sometimes restrict $\pi^{m}_{n}$ to $I^{m}$ where $I:=(0,1)$, without changing the notation. 
For a set $X\subset\R^{\ell}$ we denote $X^{c}$ to be the complement of $X$ and $\partial X$ to be the \emph{frontier} of $X$, that is, $\partial X:=\overline{X}\setminus X$. Finally, we use the inclusion symbol $\subset$ to denote \emph{weak} inclusion. To indicate a strict inclusion, we use the symbol $\subsetneq$. \\ 

\subsection{About this paper}
Sharply o-minimal structures (see \Cref{sec:Sharp_structures}) are o-minimal structures with a double filtration on the collection of all definable sets by two natural numbers, \say{degree} and \say{format} (called FD-filtration) satisfying some natural axioms. They were introduced by the first two authors in \cite{SharpIMC}, and used by the authors in \cite{Wilkie_conj_Proof} to prove Wilkie's conjecture \cite[Conjecture 1.11]{Pila-Wilkie}. The axioms put forth in \cite{Wilkie_conj_Proof} and \cite{SharpIMC} can be naturally weakened or strengthened, and one of the main goals of the present paper is to show that such changes in the axioms will result in essentially equivalent classes of o-minimal structures. So, o-minimal structures (together with an FD-filtration) satisfying the axioms introduced in \cite{Wilkie_conj_Proof},\cite{SharpIMC} are henceforth called \emph{Weakly} sharply o-minimal, or W\so-minimal for short. The name \say{Sharply o-minimal structures} (or \so-minimal for short) will now refer to o-minimal structures (together with an FD-filtration) satisfying a stronger set of axioms, in particular \so-minimal structures are also W\so-minimal. We also introduce presharp o-minimal structure, (or P\so-minimal for short), which are o-minimal structures (again, together with an FD-filtration) satisfying a set of axioms weaker than those put forth in \cite{SharpIMC},\cite{Wilkie_conj_Proof}. In particular, W\so-minimal structures are also P\so-minimal. See \Cref{defi:psharp_o_min}, \Cref{defi:wsharp_o_min}, \Cref{defi:sharp_o_min} for the formal introduction of these axioms, as well as \Cref{defi:fd_filtrations} for the formal definition of FD-filtrations. \\ 

One of the cornerstones of the theory of o-minimality is the notion of cellular decomposition and the theorem about its existence. We refer the unfamiliar reader to \cite{van_den_dries_book} or to \Cref{sec:Sharp_cellular_decomposition} below for the notions of cells and cellular decompositions. Roughly speaking, the cellular decomposition theorem asserts that any definable set can be decomposed into a finite number of \say{simple} sets called cells. Moreover, the number of cells required is uniform in families. In either of the sharp, presharp, or weakly sharp settings, where sets are endowed with natural numbers $\cF,D$ (format and degree) that govern their complexity, it is natural to ask if the number of cells needed, as well as their own format and degree, can be controlled in terms of $\cF,D$.
\begin{que}
    Can one find sequences $\{P_{\cF}\}_{\cF\in\N}\subset\N[D,k],\{Q_{\cF}\}_{\cF\in\N}\subset\N[D]$ of polynomials and a function $C:\N\to N$ such that the following holds. Given any collection $X_{1},\dots,X_{k}\subset\R^{\ell}$ of definable sets, all of which have format $\cF$ and degree $D$, it is possible to decompose $\R^{\ell}$ into $P_{\cF}(D,k)$ cells that are compatible with $X_{1},\dots,X_{k}$, such that the cells have format $C(\cF)$ and degree $Q_{\cF}(D)$?
\end{que}
 If the answer is yes, we say the structure has sharp cellular decomposition, or \s CD for short. Just as the classical cellular decomposition theorem is crucial for applications of o-minimality, \s CD is crucial for applications of \so-minimality. For instance, all of the results of \cite{Wilkie_conj_Proof} are proved under the assumption of \s CD. Moreover, in this paper, we show that under the assumption of \s CD, all three of the introduced axiom sets (those that define P\so-minimality, W\so-minimality, \so-minimality) are equivalent. \\

It is not currently known whether every sharply o-minimal structure has \s CD or not. In this paper, we prove a weaker result sufficient for quantitative applications. Namely, given a P\so-minimal structure $(\cS,\Omega)$ (here $\cS$ is the structure and $\Omega$ is an FD-filtration on $\cS$) we define a new FD-filtration $\Omega^{'}$ of $\cS$ such that $(\cS,\Omega^{'})$ is a \so-minimal structure with \s CD, and such that $\Omega$ is \emph{reducible} to $\Omega'$. This means that if a set $X$ has format $\cF$ and degree $D$ with respect to the FD-filtration $\Omega$, then it has format $C(\cF)$ and degree $P_{\cF}(D)$ with respect to the new FD-filtration $\Omega'$, where $C:\N\to\N$ is a fixed function and $\{P_{\cF}\}_{\cF\in\N}\subset\N[D]$ is a fixed sequence of polynomials (see \Cref{defi:reduction_of_filtration} below for the formal definition).
This construction is inspired by the one developed in \cite{Pfaffian_cells}, where the authors produce an FD-filtration $\Omega^{*}$ on $\rpfaff$ (the structure generated by restricted Pfaffian functions), which is weakly-sharp with \s CD, out of an FD-filtration $\Omega$ introduced by Gabrielov and Vorobjov (see \cite{Pfaffaian_complexity} or \cref{subsec:Pffafian_example}). It is not known whether $\left(\rpfaff,\Omega\right)$ is W\so-minimal or if it has \s CD. The FD-filtration $\Omega^{*}$ on the structure $\rpfaff$ was the one used in \cite{Wilkie_conj_Proof}. 

 As an application of our main results, in the last section of this paper we discuss sharp triangulations of sets definable in sharply o-minimal structures, and prove a bound on their Betti numbers. 
\subsection{Sharply o-minimal structures}
\label{sec:Sharp_structures}
Let $\cS$ be any structure, though in this paper we will always work with an o-minimal expansion of the real field. We introduce the notion of an FD-filtration, a filtration on the collection of all definable sets in $\cS$ by two natural numbers, called format (denoted $\cF$) and degree (denoted $D$).

\begin{defi}[FD-filtrations]
\label{defi:fd_filtrations}
We say that $\Omega=\{\Omega_{\cF,D}\}_{\cF,D\in\N}$ is an FD-filtration on  $\cS$ if 
\begin{enumerate}
    \item every $\Omega_{\cF,D}$ is a collection of definable sets,
    \item $\Omega_{\cF,D}\subset\Omega_{\cF+1,D}\cap\Omega_{\cF,D+1}$ for every $\cF,D$, and
    \item every definable set belongs to $\Omega_{\cF,D}$ for some $\cF,D$.
\end{enumerate}  
If $X\in\Omega_{\cF,D}$ we say that $X$ has format $\cF$ and degree $D$. We  say that a definable function $f:X\to Y$ has format $\cF$ and degree $D$ if its graph $\Gamma_{f}$ is in $\Omega_{\cF,D}$. We will also abuse notation by writing $f\in\Omega_{\cF, D}$, and generally by referring to $f$ as 'being in' $\Omega_{\cF, D}$, to indicate that $f$ has format $\cF$ and degree $D$.
\end{defi}
\begin{rem}
    Note that FD-filtrations can be naturally intersected. If $\Omega^{1},\Omega^{2}$ are two FD-filtrations, then we can define $\Omega^{3}_{\cF,D}:=\Omega^{1}_{\cF,D}\cap\Omega^{2}_{\cF,D}$ for every pair $(\cF,D)$ of natural numbers, and then the collection $\{\Omega^{3}_{\cF,D}\}_{\cF,D}$ is again an FD-filtration, denoted by $\Omega^{1}\cap\Omega^{2}$. 
\end{rem}
We introduce the notion of reduction and equivalence of FD-filtrations.
\begin{defi}[Reduction of FD-filtrations]
\label{defi:reduction_of_filtration}
  Let $\Omega,\Omega'$ be two FD-filtrations on a structure $\cS$. We say
  that $\Omega$ is \emph{reducible} to $\Omega'$ and write
  $\Omega\le\Omega'$ if there exists a function $a:\N\to\N$, and for every $\cF\in\N$  a non-zero polynomial $P_{\cF}\in\N[D]$ such that
  \begin{equation}
    \Omega_{\cF,D}\subset \Omega'_{a(\cF),P_{\cF}(D)} \qquad \forall \cF,D\in\N.
  \end{equation}
  In other words, $\Omega\le\Omega'$ if $\Omega_{\cF,D}\subset\Omega^{'}_{\format,\degree}$ for every $\cF,D$.
  We say that $\Omega,\Omega'$ are equivalent if $\Omega\le\Omega'$ and $\Omega'\le\Omega$, and write $\Omega\simeq\Omega'$.
\end{defi}
\begin{rem}
\label{rem:star_counting}
The following is the main idea of the definition of reduction. Let $\cS$ be an o-minimal expansion of the real field, and let $\Omega\leq\Omega'$ be two FD-filtrations on $\cS$. Let
\begin{equation}
    F(X_1,\dots,X_{m},f_{1},...,f_{n})
\end{equation} 
be a function that associates a positive real number to every tuple of $m$ definable sets and $n$ definable functions. Suppose that $\Omega'$ possesses the following property: there exist functions $a'(\cF),b'(\cF)>0$, such that if the sets $X_{j}$ and functions $f_{i}$ are in $\Omega'_{\cF,D}$, then $F(X_1,\dots,X_{m},f_{1},\dots,f_{m})\leq a'(\cF)D^{b'(\cF)}$. Then there exist functions $a(\cF),b(\cF)>0$ such that the analogous statement is true for $\Omega$, that is, such that if the sets $X_{j}$ and functions $f_{i}$ are in $\Omega_{\cF,D}$, then $F(X_1,\dots,X_{m},f_{1},\dots,f_{m})\leq a(\cF)D^{b(\cF)}$. Thus, while there is a \say{cost} to reducing a filtration, the cost is polynomial in the degree and so keeps the quantitative aspect of \so-minimality within the same \say{scale}.
\end{rem}
We will now define \so-minimal (resp. W\so-minimal, P\so-minimal) structures. While all three variants share common axioms, we will explicitly state all of the axioms in every case for clarity. 
\begin{defi}[presharp structures]
\label{defi:psharp_o_min}
A \emph{presharp} o-minimal structure, denoted by P\so-minimal, is a pair $\Sigma=(\cS,\Omega)$ where $\cS$ is an o-minimal expansion of the real field, and $\Omega=\{\Omega_{\cF,D}\}_{\cF,D\in\bbN}$ is an FD-filtration, such that for every $\cF\in\bbN$ there exists a non-zero polynomial $P_{\cF}\in\N[D]$ such that the following axioms are satisfied.\\ \\
If $A\in\Omega_{\cF,D}$ then:
\begin{enumerate}
    \item[(P1)] if $A\subset\bbR$, it has at most $P_{\cF}(D)$ connected components,
    \item[(P2)] if $A\subset\bbR^{\ell}$ then $\cF\geq\ell$,
    \item[(P3)] if $A\subset\bbR^{\ell}$ then $\pi_{\ell-1}(A),A^{c},A\times\bbR,\bbR\times A$ are in $\Omega_{\cF+1,D}$.
 \end{enumerate}
 If $A_1,A_{2}\subset\bbR^{\ell}$ with $A_{i}\in\Omega_{\cF_i,D_i}$ and     $\cF:=\max\{\cF_{1},\cF_{2}\}$, $D:=D_1+D_2$,  then:
 \begin{enumerate}
        \item[(P4)] $A_1\cup A_2\in\Omega_{\cF+1,D}$, 
        \item[(P5)] $A_1\cap A_2\in\Omega_{\cF+1,D}$.
\end{enumerate}    
If $P\in\bbR[x_1,\dots,x_{\ell}]$, then: 
\begin{enumerate}
    \item[(P6)] $\{P=0\}\in\Omega_{\ell,\deg P}$.
\end{enumerate}
\end{defi}
\begin{defi}[weakly-sharp structures]
\label{defi:wsharp_o_min}
A \emph{weakly-sharp} o-minimal structure, denoted W\so-minimal, is a pair $\Sigma=(\cS,\Omega)$ and a sequence of polynomials $P_{\cF}$ as in \Cref{defi:sharp_o_min}, such that the following axioms are satisfied.\\ \\
If $A\in \Omega_{\cF,D}$ then:
\begin{enumerate}
    \item[(W1)] if $A\subset\bbR$, it has at most $P_{\cF}(D)$ connected components,
    \item[(W2)] if $A\subset\bbR^{\ell}$ then $\cF\geq\ell$,
    \item[(W3)] if $A\subset\bbR^{\ell}$ then $\pi_{\ell-1}(A),A^{c},A\times\bbR,\bbR\times A$ are in $\Omega_{\cF+1,D}$.
\end{enumerate}
If $A_1,\dots,A_{k}\subset\bbR^{\ell}$ with $A_{i}\in\Omega_{\cF_i,D_i}$ and $\cF:=\max_{i}\{\cF_{i}\},\;D:=\sum_{i}D_i$, then:
\begin{enumerate}
    \item[(W4)]  $\cup_{i}A_{i}\in\Omega_{\cF,D}$,    
    \item[(W5)]  $\cap_{i}A_{i}\in\Omega_{\cF+1,D}$.
\end{enumerate}
If $P\in\bbR[x_1,\dots,x_{\ell}]$, then:
\begin{enumerate}
    \item[(W6)] $\{P=0\}\in\Omega_{\ell,\deg P}$.
\end{enumerate}
\end{defi}
\begin{rem}[comparison between P\so-minimal and W\so-minimal structures]
The only differences are in axioms 4,5. In weakly-sharp structures, taking finite unions no longer increases format. Moreover, now arbitrary (finite) intersections increase the format only by $1$. Note that W\so-minimal structures are also presharp. 
\end{rem}
\begin{defi}[sharp structures]
\label{defi:sharp_o_min}
A \emph{sharp} o-minimal structure or \emph{sharply o-minimal} structure, denoted \so-minimal, is a pair $\Sigma=(\cS,\Omega)$ and a sequence of polynomials $P_{\cF}$ as in \Cref{defi:sharp_o_min}, such that the following axioms are satisfied.\\ \\
If $A\in \Omega_{\cF,D}$ then:
\begin{enumerate}
    \item[(\s1)] if $A\subset\bbR$, it has at most $P_{\cF}(D)$ connected components,
    \item[(\s2)] if $A\subset\bbR^{\ell}$ then $\cF\geq\ell$,
    \item[(\s3)] if $A\subset\bbR^{\ell}$ then $\pi_{\ell-1}(A),A^{c}$ are in $\Omega_{\cF,D}$, while $A\times\bbR,\bbR\times A$ are in $\Omega_{\cF+1, D}$.
\end{enumerate}
If $A_1,\dots,A_{k}\subset\bbR^{\ell}$ with $A_{i}\in\Omega_{\cF_i,D_i}$ and $\cF:=\max_{i}\{\cF_{i}\},\;D:=\sum_{i}D_i$, then:
\begin{enumerate}
    \item[(\s4)]  $\cup_{i}A_{i}\in\Omega_{\cF,D}$,    
    \item[(\s5)]  $\cap_{i}A_{i}\in\Omega_{\cF,D}$.
\end{enumerate}
If $P\in\bbR[x_1,\dots,x_{\ell}]$, then:
\begin{enumerate}
    \item[(\s6)] $\{P=0\}\in\Omega_{\ell,\deg P}$.
\end{enumerate}
\end{defi}
\begin{rem}[comparison between W\so-minimal and \so-minimal structures]
There are differences in the third and fifth axioms. Firstly, taking projections and complements no longer increases the format in \so-minimal structures.  Note crucially that multiplying by $\R$, i.e, \say{adding new variables} has to increase the format in all versions of the axioms. Moreover, in sharp structures, taking finite intersections no longer increases the format. Note that \so-minimal structures are in particular weakly-sharp. 
\end{rem}
\begin{rem}
Given an o-minimal expansion $\cS$ of the real field, we will sometimes say that an FD-filtration is sharp (resp. weakly-sharp, presharp) if the pair $(\cS,\Omega)$ is sharp (resp. weakly-sharp, presharp) for brevity. More generally, we sometimes refer to properties of $(\cS,\Omega)$ as being properties of $\Omega$, if $\cS$ is fixed and understood from context. 
\end{rem}
\begin{rem}
We recall once again that in \cite{Wilkie_conj_Proof,SharpIMC}, W\so-minimal structures were called \so-minimal structures.
\end{rem}
\begin{rem}
    \label{rem:asymptotic_notation}
    Let us introduce a useful, though non-standard piece of notation. The symbol $O_{a}(1)$ denotes a specific universally fixed positive constant (possibly different at each occurrence) $C(a)$ that depends only on $a$. The symbol $\poly_{a}(b)$ denotes a universally fixed polynomial $P_{a}(b)$ in $b$ with positive coefficients that depends only on $a$. Thus, rather than representing (asymptotic) classes of functions like ordinary asymptotic notations, these symbols are simply stand-ins for specific constants and polynomials we do not keep track of. Almost always, $a$ is a natural number denoted $\cF$ and called \emph{format} while $b$ is a natural number denoted $D$ and called \emph{degree}. Thus, for instance, instead of saying \say{There exists a polynomial $P_{\cF}\in\N[D]$ such that if $X$ is a set of format $\cF$ and degree $D$ then $X$ has at most $P_{\cF}(D)$ connected components}, we may say \say{If $X$ has format $\cF$ and degree $D$, then it has $\degree$ connected components}.
\end{rem}
Strictly speaking, we need a bound on the number of connected components of a definable set in arbitrary dimension. Due to the following proposition, it is sufficient to axiomatize this bound for subsets of $\R$ as in (\s 1),(P1),(W1). See \Cref{sec:bound_on_H_0} for the proof, which requires no further terminology.
\begin{prop}
\label{prop:bound_on_H_0} 
Fix a presharp structure, and let $X\subset\bbR^{\ell}$ be a definable set of format $\cF$ and degree $D$. Then $X$ has at most $\degree$ connected components.
\end{prop}
\begin{rem}
    While we can bound the number of connected components, in general, we cannot estimate the formats and degrees of connected components, and in fact, this proves to be the one \say{obstruction} to proving \s CD in general. See \Cref{sec:Sharp_cellular_decomposition} for more details. 
\end{rem}
We now state a simplified version of our main result, which is largely inspired by \cite{Pfaffian_cells}.
\begin{thm}
\label{thm:for_show}
Let $(\cS,\Omega)$ be a presharp o-minimal structure. Then there exists an FD-filtration $\Omega'$ such that $\Omega\le\Omega'$, and moreover $(\cS,\Omega')$ is a \so-minimal structure with \s CD. 
\end{thm}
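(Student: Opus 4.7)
The plan is to take for $\Omega^{*}$ the FD-filtration whose members are, by construction, unions of sharp cells with controlled complexity, so that \s CD and the sharpness axioms hold by design, and the substantive work becomes the reduction $\Omega\le\Omega^{*}$. Concretely, I would first axiomatize a notion of \emph{sharp cylindrical cell}: a cell $C\subset\R^{\ell}$ whose defining data (graphs and bands of finitely many $\Omega$-definable functions $f_{1}<\cdots<f_{k}$ over a base cell) have $\Omega$-format $O(\cF)$ and total $\Omega$-degree $O(D)$. Then declare $A\in\Omega^{*}_{\cF,D}$ iff $A\subset\R^{\ell}$ with $\ell\le\cF$ and $A$ is a union of cells in a sharp cylindrical decomposition whose parameters $(\cF,D)$ dominate the cell counts, formats, and degrees in the natural polynomial way. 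One verifies that this is indeed an FD-filtration in the sense of \Cref{defi:fd_filtrations}.

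Next, I would check the axioms (\s1)--(\s6) for $\Omega^{*}$ directly. Axiom (\s2) and (\s6) are immediate from how the ambient dimension and polynomial cells are encoded. The complement/projection axiom (\s3) comes essentially for free: a sharp decomposition adapted to $A$ is automatically adapted to $A^{c}$ (take the complementary cells), and $\pi_{\ell-1}(A)$ is the union of projected base cells, which already form a sharp decomposition in one lower dimension without raising the format. For the union/intersection axioms (\s4)--(\s5) one takes the common refinement of two sharp cylindrical decompositions; the point to verify is that the refinement has total complexity polynomial in the sum of the inputs and that (\s5) keeps the format fixed, which is the place where sharp, rather than merely weakly-sharp, intersection control becomes important. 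Finally, the component bound (\s1) follows from a cell count in a one-dimensional sharp decomposition, which ultimately reduces to (P1). Sharp cell decomposition itself (\s CD) is then tautological from the definition of $\Omega^{*}$.

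The heart of the proof is the reduction $\Omega\le\Omega^{*}$: for every $A\in\Omega_{\cF,D}$ one must exhibit a sharp cylindrical decomposition of $A$ with format $a(\cF)$ and degree $P_{\cF}(D)$. I would argue by induction on the ambient dimension $\ell\le\cF$, using (P1) as the base case to control fibers in $\R$, and proceeding inductively by cylindrical slicing: produce a base set $B\subset\R^{\ell-1}$ encoding the discriminant locus of the $\ell$-th coordinate projection of auxiliary $\Omega$-sets built from $A$ using (P3)--(P5), decompose $B$ sharply via the inductive hypothesis, and lift cell-by-cell using (P1)-style fiber control. The main obstacle is budgeting degree growth: the only quantitative axiom available in P\so-minimality is (P1), and it must be deployed so that each recursive step multiplies the degree by a $\cF$-dependent polynomial rather than by a factor depending on the running degree, so as to avoid the double-exponential blowup that afflicts naive cylindrical algorithms. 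This is precisely the issue solved for the Pfaffian structure in \cite{Pfaffian_cells}, and I expect the present argument to be an abstraction of that one, with the Pfaffian fewnomial bounds replaced by the abstract component bound (P1) and the FD-bookkeeping organized as in \Cref{defi:sharp_o_min}.
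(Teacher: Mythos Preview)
Your proposed $\Omega^{*}$ is not the paper's $\Omega^{*}$, and the difference is the whole point. You define $\Omega^{*}$ so that \s CD is tautological and push all the work into the reduction $\Omega\le\Omega^{*}$; but unpacking your definition, that reduction is exactly the statement that every $A\in\Omega_{\cF,D}$ admits a cylindrical cell decomposition into cells of $\Omega$-format $a(\cF)$ and total $\Omega$-degree $P_{\cF}(D)$. This is \s CD for $\Omega$ itself, which the paper explicitly does \emph{not} prove and in fact suspects is false in general (see \Cref{rem:bad_cellular_decomp}). The concrete obstruction is not degree blowup but \emph{format} blowup: to isolate, say, the $D/2$-th branch of a fibered curve of degree $D$ one seems to need a formula with $\sim D$ quantified variables, hence $\Omega$-format $\ge D$. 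Your inductive slicing sketch does not explain how this is avoided, and the reference to \cite{Pfaffian_cells} is misleading: that paper does not build $\Omega^{*}$ out of cells either.

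The paper's actual move is to take $\Omega^{*}_{\cF,D}$ to be finite unions of projections of \emph{connected components} of $\Omega_{\cF_\alpha,D_\alpha}$-sets (with $\cF=\max\cF_\alpha$, $D=\sum D_\alpha$). Then $\Omega\le\Omega^{*}$ is cheap --- it follows from the $\poly_{\cF}(D)$ bound on the number of components (\Cref{prop:bound_on_H_0}) --- and the hard work becomes proving \s CD for $\Omega^{*}$ (\Cref{thm:*omega_effective_cellular_decomposition}). The connected-component primitive is precisely what lets one ``pick a branch'' without naming its index, circumventing the format obstruction above; the cell decomposition is then built by an induction that only ever intersects sets \emph{pairwise} (so presharp axioms suffice), using stratification and sharp definable choice. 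Finally one passes from W\so-minimal with \s CD to \so-minimal with \s CD via structure trees (\Cref{prop:Wsharp_SCD_isSharp}). Your proposal is missing this detour through connected components, and without it the reduction step has a genuine gap.
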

\begin{rem}
    In fact, the filtration $\Omega'$ mentioned in \Cref{thm:for_show} is equivalent to a key FD-filtration $\Omega^{*}$ that can be explicitly defined given $\Omega$. See \Cref{sec:Sharp_cellular_decomposition} below.
\end{rem}
This theorem is proved in two key steps. The first is to construct a new filtration $\Omega''$ such that $\Omega$ is reducible to $\Omega''$ and such that $(\cS,\Omega'')$ is W\so-minimal with \s CD. The following proposition gives the second step, the proof of which can be found in \Cref{sec:main_proof}.
\begin{prop}
\label{prop:Wsharp_SCD_isSharp}
 Let $(\cS,\Omega)$ be a W\so-minimal structure with \s CD. Then there exists a filtration $\Omega'$ that is equivalent to $\Omega$, such that $(\cS,\Omega')$ is \so-minimal with \s CD.
\end{prop}
So, applying \Cref{prop:Wsharp_SCD_isSharp} to $(\cS,\Omega'')$ would finish the proof of \Cref{thm:for_show}.
Thus, the current state of affairs can be summarized in the following diagram, where the arrow represents reduction, and the equality represents equivalence. 
\begin{equation*}
\begin{tikzcd}
P\s\textnormal{o-minimal}\arrow[r, phantom, sloped, "\bigsupset"]  \arrow[d, phantom, sloped, "\bigsupset"]\arrow{dr} & W\s\textnormal{o-minimal}\arrow[r, phantom, sloped, "\bigsupset"] \arrow[d, phantom, sloped, "\bigsupset"] & \s\textnormal{o-minimal}\arrow[d, phantom, sloped, "\bigsupset"] \\
P\s\textnormal{o-minimal}+\s\textnormal{CD} \arrow[r, phantom, sloped, "\bigsupset"] & W\s\textnormal{o-minimal}+\s\textnormal{CD} \rar[equal] & \s\textnormal{o-minimal}+ \s\textnormal{CD}
\end{tikzcd}
\end{equation*}
\subsection{Generated FD-filtrations}
It is often convenient to consider a structure $\cS$ \emph{generated} by a collection of sets $\{A_{\alpha}\subset\R^{\ell_{\alpha}}\}_\alpha$, that is, $\cS$ is the minimal structure in which all of the sets $A_{\alpha}$ are definable. For example, the semi-algebraic structure $\R_{\alg}$ is generated by the collection of (real) Zariski-closed sets. It is o-minimal due to a celebrated result of Tarski and Seidenberg, see \cite{tarski_seidenberg}, for example. Similarly, $\R_{\textnormal{exp}}$ is the structure generated by all the sets in $\R_{\alg}$, and the graph of the exponential function defined on all of $\R$. This structure is o-minimal, and this is a highly non-trivial fact, that follows from e.g the work of Wilkie, see \cite{wilkie:complements}. 

Similarly, one can consider FD-filtrations sharply (resp. presharply, weakly-sharply) generated by some initial associations of formats and degrees for some collection of definable sets. By this we mean the \emph{minimal} FD-filtration 'containing' the initial data and satisfying axioms (\s2)-(\s6) (resp. axioms (P2)-(P6) and (W2)-(W6)). To this end, we introduce a partial order on the collection of FD-filtrations (on a fixed structure).
\begin{defi}
    Let $\cS$ be any structure, and let $\Omega^{1},\Omega^{2}$ be two FD-filtrations on $\cS$. We say that $\Omega^{2}$ \emph{refines} $\Omega^{1}$, denoted $\Omega^{1}\subset\Omega^{2}$, if for every pair $(\cF,D)\in\N^{2}$ we have the inclusion $\Omega^{1}_{\cF,D}\subset\Omega^{2}_{\cF,D}$.
\end{defi}
\begin{rem}
    Note that the relation $\subset$ of refinement is a partial order on the collection of FD-filtrations on $\cS$, while the previously introduced relation $\leq$ of reduction (see \Cref{defi:reduction_of_filtration}) is not.
\end{rem}
\begin{lem}
\label{lem:generated_filtrations}
    Let $\cS$ be an o-minimal structure that is generated by a collection $\{A_{\alpha}\subset\R^{\ell_{\alpha}}\}_{\alpha}$ of definable sets. Suppose that for every $\alpha$ there is an associated set $\cD_{\alpha}=\{(\cF_{\alpha\beta},D_{\alpha\beta})\}_{\beta}$ of pairs of natural numbers satisfying $\cF_{\alpha\beta}\geq\ell_{\alpha}$ for every $\alpha,\beta$. Then there exists a unique FD-filtration $\Omega$  (resp. $\Omega^{\textnormal{pre}},\Omega^{\textnormal{weak}}$) on $\cS$ that satisfies axioms (\s2)-(\s6) (resp. (P2)-(P6) and (W2)-(W6)) with the following properties.\begin{enumerate}
        \item For every $\alpha,\beta$ the set $A_{\alpha}$ is in $\Omega^{\textnormal{pre}}_{\cF_{\alpha\beta},D_{\alpha\beta}}$, in $\Omega^{\textnormal{weak}}_{\cF_{\alpha\beta},D_{\alpha\beta}}$ and in $\Omega_{\cF_{\alpha\beta},D_{\alpha\beta}}$.
        \item The FD-filtration $\Omega$ (resp. $\Omega^{\textnormal{pre}},\Omega^{\textnormal{weak}}$) is minimum among all FD-filtrations with property (1) above, satisfying the axioms (\s2)-(\s6) (resp. (P2)-(P6), (W2)-(W6)).  
    \end{enumerate}
\end{lem}
The proof of this lemma is left as an instructive exercise for the reader. Indeed, it is not hard to see that such FD-filtrations exist, and their intersection will be the minimum among all such FD-filtrations. One can construct these filtrations by induction on format, or one can use a suitable variation of the notion of \emph{structure trees}, introduced in \Cref{subsection:Structure_Trees}, see \Cref{lem:ST_generates_filtrations}. We call $\Omega,\Omega^{\textnormal{pre}},\Omega^{\textnormal{weak}}$ the FD-filtration \emph{sharply} (pre-sharply, weakly-sharply respectively) generated by the datum $\left\{\mathcal{D}_{\alpha}\right\}_{\alpha}$.
\begin{rem}
   It is often convenient to take all definable sets as a generating collection and use the degrees and formats that a fixed FD-filtration gives. So, for example, one can consider the FD-filtration sharply generated by a given presharp FD-filtration.  
\end{rem}
\begin{rem}
    Crucially, unlike what the name suggests, note that $(\cS, \Omega)$ (resp. $(\cS,\Omega^{\textnormal{pre}})$, $(\cS,\Omega^{\textnormal{weak}})$) is not automatically \so-minimal (resp. P\so-minimal, W\so-minimal). Rather, it automatically satisfies axioms $(\s 2)-(\s 6)$ (resp. axioms (P2)-(P6), (W2)-(W6)), and is \so-minimal if and only if in addition it satisfies axiom (\s1) (resp. axiom (P1), (W1)). 
\end{rem}
\subsection{Examples and non-examples of \so-minimal structures}
We briefly recall below the only two known examples of \so-minimal structures (the semialgeraic and Pfaffian structures), and explain why $\ranalytic$ cannot be made into a \so-minimal structure. We refer the reader to \cite{SharpIMC} for further details. 
\subsubsection{The semi-algebraic structure $\semialgebraic$}
Let $\Omega'_{\ell,D}$ be the collection of sets $X$ in $\R^{\ell}$ which are presentable as a union of finitely many basic sets
\begin{equation}
    \{P_1=\dots=P_k=0, \;Q_1,\dots,Q_s>0\}, \quad P_i, Q_j\in\mathbb{R}[x_{1},\dots,x_\ell],
\end{equation}
such that the sum of the degrees of all $P$ and $Q$ over all these basic sets defining $X$ is $D$. The filtration $\Omega'$ is not \so-minimal. However, the filtration $\Omega$ sharply generated by $\Omega'$ is \so-minimal. This is a non-trivial fact that follows from effective cellular decomposition in the semialgebraic category, see \cite[Chapter 5]{Algos_real_algebrometry}.
\subsubsection{The analytic structure $\R_\an$}
Not surprisingly, $\ranalytic$ is not sharply o-minimal with respect to
any FD-filtration. In fact, it is not even presharp, but by \Cref{thm:for_show} it is sufficient to show that it is not sharp. Assume the contrary. Let $\omega_1=1$ and
$\omega_{n+1}=2^{\omega_n}$, and let $\Gamma=\{y=f(z)\}\subset\C^2$
denote the graph of the holomorphic function
$f(z)=\sum_{j=1}^\infty z^{\omega_j}$ restricted to the closed disc of radius
$1/2$ centered at the origin. Clearly $\Gamma$ is definable in $\ranalytic$, and by the axioms of sharpness, the set
\begin{equation}
  X_{\epsilon,n}:=\Gamma\cap\big\{y=\epsilon+\sum_{j=1}^n z^{\omega_j}\big\},
\end{equation}
has a universally bounded format (that depends only on the format of $\Gamma$), and its degree is the maximum among $\omega_{n}$ and the degree of $\Gamma$, which for large $n$ is just $\omega_{n}$. Therefore, the
number of points in $X_{\epsilon,n}$
is bounded by $P(\omega_n)$, where $P$ is some fixed polynomial with positive coefficients that depends only on $\Gamma$ (and not on $\epsilon$). Note that $X_{\epsilon,n}$ is not a subset of $\R$, but the desired bound on its cardinality still follows from e.g \Cref{prop:bound_on_H_0}, or from \Cref{thm:for_show}. Fix an $N$ large enough such that $\omega_{N+1}=2^{\omega_{N}}>P(\omega_{N})$. By basic complex analysis, if $\epsilon$ is small enough, the number of points in $X_{\epsilon,N}$ is at least 
$\omega_{N+1}$, which is a contradiction.
\subsubsection{Pfaffian structures}
\label{subsec:Pffafian_example}
\label{marker}
Let $B\subset\R^\ell$ be an open box. A sequence
$f_1,\ldots,f_m:B\to\R$ of real-analytic functions is called a
\emph{Pfaffian chain} if they satisfy a triangular system of algebraic
differential equations of the form
\begin{equation}
  \pderivative{f_i}{x_j} = P_{ij}(x_1,\ldots,x_\ell,f_1,\ldots,f_i), \qquad \forall 1\le i\le m,1\le j\le \ell,
\end{equation}
where $P_{ij}$ are polynomials with real coefficients. 
The Pfaffian chain is called \emph{restricted} if $B$ is bounded and
$f_1,\ldots,f_m$ extend as real analytic functions to a neighborhood of $\bar B$. A
Pfaffian function $f$ is a polynomial in the variables and the functions from the chain, i.e a function of the form
$Q(x_1,\ldots,x_\ell,f_1,\ldots,f_m)$ where $Q$ is a polynomial with real coefficients. The degree of $f$ is defined to be the degree of $Q$. We denote the structure
generated by the Pfaffian functions by $\pfaff$, and its restricted
analog by $\rpfaff$.

Gabrielov and Vorobjov \cite{Pfaffaian_complexity} defined an FD-filtration $\Omega$ on $\rpfaff$, which is not known to be sharp (or even presharp). Roughly speaking, in $\Omega$, the format of a semipfaffian set $X\subset\R^{k}$ is the maximum among $k$ and the length of the Pfaffian chains defining the Pfaffian functions appearing in a representation of $X$ as a finite union of basic sets, and the degree of $X$ is the sum of the degrees of all Pfaffian functions and the polynomials $P_{ij}$ in all Pfaffian chains appearing in the same representation. If $Y$ is a projection of $X$, the format and degree of the subpfaffian set $Y$ are defined to be those of $X$. While Gabrielov and Vorobjov were able to obtain bounds on the sum of the Betti numbers of a semipfaffian set $X$ which are polynomial in the degree of $X$, they were not able to obtain the same bounds for subpfaffian sets in full generality. The main, and crucial, difficulty is that if $A\in\Omega_{\cF,D}$ then it is only known that $A^{c}\in\Omega_{\degree,\degree}$, i.e. the format of $A^c$ depends also on the degree of $A$.\\ 

In \cite{Pfaffian_cells}, Binyamini and Vorobjov introduce a new notion of degree for subpfaffian sets, with which they do achieve polynomial bounds on the Betti numbers of subpfaffian sets. Essentially, they introduce an FD-filtration $\Omega^{*}$ based on $\Omega$ that  makes $\rpfaff$ into a W\so-minimal structure with \s CD. As $\Omega\le\Omega^{*}$, they obtain bounds on the sum of the Betti numbers of subpfaffian sets which are polynomial in the degree in the sense of \cite{Pfaffaian_complexity}.  As mentioned above, our construction generalizes the construction of $\Omega^{*}$ from \cite{Pfaffian_cells} to the settings of presharp o-minimal structures. 

\subsection{Format and degree of first order formulae}
Let $(\cS,\Omega)$ be a \so-minimal structure, and let $\cL$ be the language with atomic predicates of the form $(x\in X)$ for every definable set $X$, and with neither constants nor function symbols. We will assume that the variables of $\cL$ are linearly ordered, so if a formula $\psi$ has $n$ free variables, then it uniquely defines a set in $\R^{n}$. The goal of this section is to filter the formulae in $\cL$ by format and degree, such that if a formula $\psi$ has format $\cF$ and degree $D$ then it defines a set in $\Omega_{\format,\degree}$, see \Cref{prop:fordeg_of_formula_set}. Note that this has already been done in \cite{Pfaffian_cells} for the structure $\rpfaff$.  
\begin{defi}[Format and degree of formulae]
\label{defi:fordeg_of_sformula}
Let $\psi$ be a formula. Suppose that there are $n$ different variables appearing in $\psi$ (either free or quantified), and let $X_{j}\in\Omega_{\cF_{j},D_{j}}$ be the sets appearing in the atomic predicates of $\psi$. Denote $\cF=\max{\cF_{j}}$ and $D:=\sum D_{j}$. Then we say that $\psi$ has format $\max\{\cF,n\}$ and degree $D$. 
\end{defi}
In this text, we will also need a notion of P-format, and it is natural to consider a notion of W-format as well. Since geometric operations (that correspond to first order operations on $\psi$) affect format and degree differently in all three variants of sharpness, the definition of format and degree for formulae should depend on the type of structure (presharp, weakly sharp, or sharp).  Moreover, since in presharp or weakly-sharp case geometric operations such as intersections and projections may increase the format, unlike the format of a formula $\psi$, the P-format and W-format of $\psi$ can't be defined just in terms of its atoms with \Cref{prop:fordeg_of_formula_set} in mind. Rather, the P-format and W-format will depend on the binary parse-tree of $\psi$. 
\begin{defi}[P-format]
\label{defi:fordeg_of_formula}
Let $\psi$ be a formula, and $d$ be the depth of the (unique, binary) parse-tree of $\psi$. Then the P-format of $\psi$ is defined to be $\max\{\cF,d\}$, where $\cF$ is the format of $\psi$.
\end{defi}
For the definition of W-format, one needs to consider a different kind of parse-tree (one which is not necessarily binary), and moreover disregard vertices of the tree that are associated with disjunction. Since we won't actually need the W\so-minimal case of the following proposition in this paper, we omit a formal definition of W-format. The following proposition is clear from the definition. In fact, axioms (\s1) or (P1) are not even needed. 
\begin{prop}
\label{prop:fordeg_of_formula_set}
Let $(\cS,\Omega)$ be a \so-minimal structure (resp. P\so-minimal), and let $\psi$ be a formula of format (resp. P-format) $\cF$ and degree $D$. Then the set that $\psi$ defines is in $\Omega_{\format,\degree}$.
\end{prop}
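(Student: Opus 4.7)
The plan is a routine induction on the parse-tree of $\psi$, treating all three cases in parallel by translating each logical operation into its corresponding axiom of sharpness; I write out the sharp case in detail, the P- and W- cases being identical in shape and differing only in the cost incurred at each node.

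At an atomic node $(x\in X_j)$ with $X_j\in\Omega_{\cF_j,D_j}$ and $X_j\subset\R^{k_j}$, I lift $X_j$ to a subset of $\R^n$ by at most $n-k_j$ applications of the cylinder clause of (\s3), together with coordinate permutations realized as intersections with graph-polynomials via (\s6). This produces a set of format $\cF_j+O(n)=O(\cF)$ and degree $O(D_j)$. At each internal node I invoke the appropriate axiom: (\s5) for $\wedge$, (\s4) for $\vee$, the complement clause of (\s3) for $\neg$, and the projection clause of (\s3) for $\exists$, rewriting $\forall$ as $\neg\exists\neg$. In the sharp case none of these operations raises the format, so the format at the root is still $O_{\cF}(1)$.

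For the degree, the degrees are summed at each union or intersection node. Using the elementary fact that for any polynomial $P$ with positive coefficients there is a polynomial $P^{*}$ with positive coefficients satisfying $P(a)+P(b)\le P^{*}(a+b)$, an easy induction shows that the degree at any node is bounded by $\poly_{\cF}$ of the sum of the degrees of the atoms in its subtree, yielding $\poly_{\cF}(D)$ at the root. For the P-case the argument is identical except that each of (P3), (P4), (P5) raises the format by one; after a parse-tree of depth $d$ this totals at most $d$, which is absorbed into the P-format $\max\{\cF,d\}$. The W-case is intermediate, handled by the modified parse-tree implicit in the definition of W-format in which disjunction vertices are disregarded, so that (W4) absorbs arbitrarily wide disjunctions without any format increase.

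The main obstacle is bookkeeping rather than substance: one must fix a single polynomial $P_{\cF}$ controlling the entire recursion, and this is precisely the role of the $\poly_{\cF}$ notation convention introduced in the excerpt, which collapses the tower of polynomial compositions generated along the parse-tree into a single universal polynomial depending only on $\cF$.
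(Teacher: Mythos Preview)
Your proof is correct and matches the paper's approach: the paper simply says the proposition is ``clear from the definition'' (noting that axiom (\s1) is not even needed), and your induction on the parse-tree using the axioms at each node is exactly the routine verification the paper has in mind.

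One small imprecision worth noting: when you write ``the projection clause of (\s3) for $\exists$'', this is only literally correct when the quantified variable is the last coordinate; for $\exists v_i$ with $i$ in the middle you again need a coordinate permutation (or a preliminary prenex conversion with variable renaming so that bound variables are quantified innermost-last). Since you already handle permutations at the atomic level via (\s6), the fix is immediate and the final bounds are unaffected: each such permutation costs $O(n)$ in format, there are at most $n$ quantifiers on any root-to-leaf path, and $n\le\cF$, so the total format remains $O_\cF(1)$ in the sharp case and the analogous bounds hold in the P- and W-cases.
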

We remark that an analogous statement holds for the weakly-sharp case. \\ 
To illustrate the mechanism of the axioms of sharpness, and how to calculate the format and degree of a formula, we provide a detailed proof of the following lemma, which is needed later.
\begin{lem}
    \label{lem:frontier}
    Fix a presharp structure, and let $X\subset\R^{n}$ be definable of format $\cF$ and degree $D$. Then the frontier $\partial X=\overline{X}\setminus X$ has format $\format$ and degree $\degree$.
\end{lem}
\begin{proof}
    The closure of $X$ is defined by the formula \begin{equation}
        \forall r \left[(r>0)\rightarrow\exists x (x\in X)\wedge |y-x|<r\right].
    \end{equation}
This formula has $n$ free variables (the coordinates of $y$), and its atomic predicates are the formulas associated to the set $X$, the set $\{(y,x,r)|\;|y-x|<r\}\subset\R^{n}\times\R^{n}\times\R$ and the set $\{r|\;r>0\}\subset\R$. Since $\cF\geq n$, it is clear by the axioms of presharp structures that the latter two sets have format $\format$ and $O(1)$ respectively, and both have degree $O(1)$ (note that they are semialgebraic). Overall, we conclude that this formula has format $O_{\cF}(1)$ and degree $D+O(1)=\poly_{\cF}(D)$. \\ \\ The depth of the parse tree for this formula is bounded by the number of logical quantifiers and connectives appearing in it, which is clearly $O(1)$. So this formula has $P$-format $\format$ and degree $\degree$. Thus by \Cref{prop:fordeg_of_formula_set} the closure $\overline{X}$ has format $\format$ and degree $\degree$, and so again since $\overline{X}\setminus X=\overline{X}\cap\left(\R^{n}\setminus X\right)$, by the axioms of presharp structures the frontier $\partial X$ has format $\format$ and degree $\degree$. 
\end{proof}
\begin{rem}
    The analogous result \cite[Fact 15]{Pfaffian_cells} for $\rpfaff$ is not at all easy to prove, and is due to Gabrielov \cite{Frontier_pfaffian}. 
\end{rem}
\subsection{Sharp cellular decomposition, $\Omega^{*}$}
\label{sec:Sharp_cellular_decomposition}
Fix $\cS$ an o-minimal expansion of $\R$. We recall the notion of a \emph{cell}. A
cell $C\subset\R$ is either a point or an open interval (possibly
infinite). A cell $C\subset\R^{\ell+1}$ is either the graph of a
definable continuous function $f:C'\to\R$ where $C'\subset\R^\ell$ is
a cell, or the set
\begin{equation}
\{(x,y)\in C'\times\R|f(x)<y<g(x)\},
\end{equation}
bounded between two graphs of definable continuous
functions $f,g:C'\to\R$ satisfying $f<g$ on $C'$. One can
also take $f\equiv-\infty$ or $g\equiv\infty$ or both in this definition.

We say that a cell $C\subset\R^\ell$ is \emph{compatible} with
$X\subset\R^\ell$ if either $C\subset X$, or $C\cap X=\emptyset$.

\begin{defi}[Cylindrical cellular decomposition]
A cylindrical cellular decomposition of $\R$ is any decomposition of $\R$ into disjoint cells. A decomposition $\{C_j\}$ of $\R^{\ell+1}$ into disjoint cells is a cylindrical cellular decomposition if the collection $\{\pi_{\ell}(C_j)\}$ is a cylindrical cellular decomposition of $\R^{\ell}$.
\end{defi}
For brevity, from now on we will use \say{cellular decomposition} for  \say{cylindrical cellular decomposition}. The following cellular decomposition theorem is the fundamental central result of the classical theory of o-minimal structures. We say that a cellular decomposition is compatible with a set $X$ if every cell from the decomposition is compatible with $X$. 
\begin{thm}[cellular decomposition]\label{thm:cell-decomp}
  Let $X_1,\ldots,X_k\subset\R^\ell$ be definable sets. Then there is
  a cellular decomposition of $\R^\ell$ whose cells are compatible with $X_1,\dots,X_k$.
\end{thm}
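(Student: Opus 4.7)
The plan is to prove the cell decomposition theorem by induction on $\ell$, following the standard template due to Knight--Pillay--Steinhorn and van den Dries. The classical argument actually requires strengthening the statement to a simultaneous induction of two claims: $(I_\ell)$ cell decomposition for any finite collection of definable subsets of $\R^\ell$, and $(II_\ell)$ piecewise continuity, i.e.\ for any definable $f\colon A\to\R$ with $A\subset\R^\ell$ there is a cell decomposition of $\R^\ell$ compatible with $A$ such that $f$ is continuous on every cell inside $A$. Without $(II_\ell)$ the inductive step cannot proceed, since the functions producing the vertical boundaries of cells in $\R^{\ell+1}$ arise as definable sections and need not be continuous a priori.

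The base case $\ell=1$ reduces $(I_1)$ to the o-minimality axiom (subsets of $\R$ are finite unions of points and open intervals), while $(II_1)$ is precisely the monotonicity theorem, proved separately from o-minimality using the dichotomy between monotone and constant behaviour on intervals. The essential auxiliary result, also proved by induction, is \emph{uniform finiteness}: for any definable family $\{Y_t\subset\R\}_{t\in T}$ with $T\subset\R^\ell$ definable, the number of connected components of $Y_t$ is bounded uniformly in $t$. This is what allows stratifying the parameter space according to the combinatorial type of the fibers.

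For the inductive step $(I_\ell)\wedge(II_\ell)\Rightarrow(I_{\ell+1})$, first I would assume all $X_i\subset\R^{\ell+1}$ and write $\pi\colon\R^{\ell+1}\to\R^\ell$ for the projection onto the first $\ell$ coordinates. Using uniform finiteness, partition $\R^\ell$ into finitely many definable pieces $B_\alpha$ on which the fiber $(X_i)_x$ has constant cardinality of boundary points (possibly $\pm\infty$ as limits); by definable choice one obtains definable functions $\xi_{i,1}(x)<\dots<\xi_{i,N_i(\alpha)}(x)$ parametrizing these boundary points as $x$ ranges over $B_\alpha$. Apply $(I_\ell)$ to the sets $B_\alpha$ together with the domains and equalizer loci of all the $\xi_{i,j}$, and then refine using $(II_\ell)$ to ensure every $\xi_{i,j}$ is continuous on each resulting cell $C'\subset\R^\ell$. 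Over each such $C'$, the graphs of the relevant $\xi_{i,j}$ and the strips between consecutive graphs yield a cell decomposition of $C'\times\R$ automatically compatible with every $X_i$. The deduction of $(II_{\ell+1})$ from $(I_{\ell+1})$ and $(II_\ell)$ is then obtained by applying cell decomposition to the ``bad'' locus where $f$ fails to be continuous and showing, via the monotonicity theorem in each vertical direction, that this locus has empty interior on each cell.

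The main obstacle is the careful bookkeeping required to run the two inductive statements simultaneously: the boundary functions $\xi_{i,j}$ must be made continuous \emph{before} cells can be declared, but their continuity is itself an instance of $(II_\ell)$, and one has to verify that applying $(II_\ell)$ to a family whose domain was just refined by $(I_\ell)$ does not create a circular dependence. The standard resolution is to package $(I_\ell)$ and $(II_\ell)$ as a single inductive assertion and to establish uniform finiteness as a third parallel statement $(III_\ell)$, proving $(I_\ell)+(II_\ell)+(III_\ell)\Rightarrow(I_{\ell+1})+(II_{\ell+1})+(III_{\ell+1})$ in one block; the nontrivial content lies in deducing $(III_{\ell+1})$ from the cell decomposition just produced, which uses the fact that cells have a well-defined dimension and that a definable family of finite sets in $\R$ has uniformly bounded cardinality, a statement equivalent to o-minimality itself applied in a definable family.
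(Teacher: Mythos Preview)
Your outline is correct and is precisely the argument the paper invokes: the paper does not give an independent proof but simply refers to van den Dries's book, whose treatment is the Knight--Pillay--Steinhorn simultaneous induction on $(I_\ell)$ and $(II_\ell)$ that you have sketched. There is nothing to add.
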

\begin{proof}
See \cite[Theorem 2.11]{van_den_dries_book}.
\end{proof}
We are finally in position to define sharp cellular decompositions.
\begin{defi}
\label{defi:sharp_cell_decomposition}
Let $\cS$ be an o-minimal expansion of $\R$ and $\Omega$ be an FD-filtration. We say that $(\cS,\Omega)$ has \emph{sharp cellular decomposition} (or \s CD for short) if for every collection $\{X_{j}\subset\R^{\ell}\}$ of $k$ sets of format $\cF$ and degree $D$, there exists a cellular decomposition of $\R^{\ell}$, compatible with the sets $X_{j}$,  into $\poly_{\cF}(D,k)$ cells of format $\format$ and degree $\degree$. Sometimes we say that $\Omega$ has \s CD if $\cS$ is \emph{fixed} and clear from context, and sometimes we say that $\cS$ has \s CD if $\Omega$ is \emph{fixed} and clear from context.
\end{defi}

\begin{rem}
\label{rem:bad_cellular_decomp}
Following the proof of cellular decomposition as it appears in \cite{van_den_dries_book}, it is possible to recover a cellular decomposition where the formats of the cells are polynomial in the degree (in all three variants of sharpness). Unlike \s CD, the dependence of the format of the cells on the degrees of the initial sets renders this kind of cellular decomposition useless for quantitative applications. The most basic difficulty in constructing compatible cells whose format does not depend on the degree of the initial sets can be illustrated by the following example. Let $X\subset\R^{2}$ be a curve of format $\cF$ and degree $D$, definable in some \so-minimal structure. In general, it seems impossible to \say{select} the middle element of a finite fiber of $\pi_{1}|_{X}$. Indeed, if $K$ is the size of a finite fiber over $x\in\R$, then to pick the middle element $y_{K/2}$ one needs the following formula:
\begin{equation}
    \exists y_{1}\dots\widehat{\exists y_{K/2}}\dots\exists y_{K} \left(y_{1}<\dots<y_{D}\right)\wedge\left((x,y_{1})\in X\wedge\dots\wedge(x,y_{K})\in X\right),
\end{equation}
where the hat signifies that the variable $y_{K/2}$ is not quantified. This formula has format $\geq K$, but generally $K$ grows polynomially in $D$, so the format of this formula depends on $D$. The authors suspect that in general not every \so-minimal structure has \s CD. 

The introduction of $\Omega^{*}$ below is meant to circumvent the problem pointed out above. The idea is that in $\Omega^{*}$ we \emph{force} all connected components $X^{\circ}$ of a set $X\in\Omega_{\cF,D}$ to be in $\Omega_{\format,\degree}$, and that with this condition we can show \s CD.
\end{rem}

Inspired by \cite{Pfaffian_cells}, given a P\so-minimal structure $(\cS,\Omega)$, we will define a new filtration $\Omega^{*}$ such that $\Omega\leq\Omega^{*}$ and $\Omega^{*}$ is equivalent to a sharp filtration with \s CD. We call $\Omega^{*}$ the \emph{star-filtration associated} to $\Omega$.
\begin{defi}[*-format and *-degree]\label{defi:star_fordeg} Let $\cS$ be an o-minimal expansion of $\R$, and let $\Omega$ be an FD-filtration on $\cS$. Let $X\subset\bbR^{\ell}$ be definable. We say that $X\in\Omega^{*}_{\cF,D}$ if there exists a finite collection of sets $X_{\alpha}\subset\R^{\ell_{\alpha}}$ such that $X=\cup_{\alpha}\pi^{\ell_{\alpha}}_{\ell}\left(X^{\circ}_{\alpha}\right)$, where for every $\alpha$ the set $X_{\alpha}$ is in $\Omega_{\cF_{\alpha},D_{\alpha}}$, the set $X^{\circ}_{\alpha}$ is a connected component of $X_{\alpha}$, and $\cF=\max_{\alpha}\{\cF_{\alpha}\}$, $D=\sum_{\alpha}D_{\alpha}$. The star-filtration $\Omega^{*}:=\{\Omega^{*}_{\cF,D}\}_{\cF,D}$ is clearly an FD-filtration, and if $X\in\Omega^{*}_{\cF,D}$, we say that $X$ has *-format $\cF$ and *-degree $D$. 
\end{defi}
The following lemma is crucial, but easy given \Cref{prop:bound_on_H_0}.
\begin{lem}
    \label{lem:compare_with_star}
    Let $(\cS,\Omega)$ be a P\so-minimal structure, and $\Omega^{*}$ the star-filtration associated to $\Omega$. Then $\Omega\leq\Omega^{*}$
\end{lem}
\begin{proof} 
If $X\in\Omega_{\cF,D}$ and $X=\cup^{N}_{i=1}X_{i}$ where $X_{i}$ are the connected components of $X$, then by definition $X$ has *-format $\cF$ and *-degree $N\cdot D$, but we know from \Cref{prop:bound_on_H_0} that $N\le\degree$.
\end{proof}
The following theorem (\Cref{thm:*omega_effective_cellular_decomposition}) is a straightforward generalization of the main result of \cite{Pfaffian_cells}, the proof going through in the general presharp case verbatim, with  \Cref{prop:bound_on_H_0}, \Cref{prop:sharp_definable_choice}, and \Cref{prop:stratification} replacing \cite[Fact 17]{Pfaffian_cells}, \cite[Lemma 18]{Pfaffian_cells}, and \cite[Fact 13]{Pfaffian_cells} respectively. A sketch of the proof of \Cref{thm:*omega_effective_cellular_decomposition} is provided in \Cref{sec:main_proof}.
\begin{thm}
\label{thm:*omega_effective_cellular_decomposition}
Let $(\cS,\Omega)$ be a P\so-minimal structure, and let $\Omega^{*}$ be the star-filtration associated with $\Omega$. Then $\Omega^{*}$ has \s CD. In other words, for every finite collection $X_1,\ldots,X_k\subset\bbR^{\ell}$ of definable sets of *-format $\cF$
  and *-degree $D$, there exists a cellular decomposition of $\R^{\ell}$ compatible with $X_1,\dots,X_k$, where each cell
  has *-format $\format$, *-degree $\degree$, and the number of cells is bounded by $\poly_{\cF}(k,D)$.
\end{thm}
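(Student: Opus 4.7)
The plan is to prove the theorem by induction on the ambient dimension $\ell$, following the template of the main theorem of \cite{Pfaffian_cells} but substituting each Pfaffian-specific ingredient by its P\so-minimal analogue. The crucial point that makes the inductive step go through is that taking a connected component does not leave $\Omega^*$ and does not blow up the $*$-format, which is precisely what lets us name individual pieces of a fiber without paying the cost exhibited in \Cref{rem:bad_cellular_decomp}.

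For the base case $\ell = 1$, each $X_i \in \Omega^*_{\cF,D}$ is by definition a finite union $\bigcup_\alpha \pi^{\ell_\alpha}_1(X^\circ_{i,\alpha})$ of projections of connected components of $\Omega$-sets $X_{i,\alpha}$. Since the projection of a connected set is an interval, each $X_i$ is a union of intervals, and the bound on the number of such intervals follows by applying the $H_0$-bound proposition (the replacement for \cite[Fact 17]{Pfaffian_cells}) to the $X_{i,\alpha}$. Their endpoints then determine a decomposition of $\R$ of size $\poly_\cF(k,D)$, and every point and every open interval in the decomposition can be written using boundedly many $\Omega^*$-operations on the original $X_{i,\alpha}$, giving cells of $*$-format $\format$ and $*$-degree $\degree$.

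For the inductive step, I would first invoke the stratification proposition (replacing \cite[Fact 13]{Pfaffian_cells}) on the $\Omega$-sets realizing the $X_i$ in an auxiliary ambient $\R^m$ to obtain a $\poly_\cF(k,D)$-size family of strata along which the projection to $\R^{\ell-1}$ has locally constant fiber topology. Projecting these strata down and applying the inductive hypothesis to the resulting family (together with the sets $\pi_{\ell-1}(X_i)$, which are themselves in $\Omega^*$ with controlled $*$-format and $*$-degree) produces a cell decomposition of $\R^{\ell-1}$ compatible with all of them, over each cell $C'$ of which each fiber $(X_i)_x$ has constant combinatorial structure: a bounded number of points and open intervals whose endpoints vary continuously with $x\in C'$.

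The main obstacle, and the reason for introducing $\Omega^*$ in the first place, is precisely the step of realizing these varying endpoints as definable continuous functions on each base cell $C'$ of controlled $*$-format and $*$-degree. This is the content of the sharp definable choice proposition, the analogue of \cite[Lemma 18]{Pfaffian_cells}: the ordered defining graphs over $C'$ are themselves $\Omega^*$-sets of $*$-format $\format$ and $*$-degree $\degree$, without the catastrophic $D$-dependence in the format that afflicts the Gabrielov--Vorobjov filtration on $\rpfaff$. Once these graphs are available, assembling the cells over $C'$ as graphs and as bands between consecutive graphs uses only finitely many $\Omega^*$-operations, and multiplying the inductive $\poly_\cF(k,D)$ bound on base cells by the uniformly bounded number of fiber-pieces per base cell yields the desired total count and the stated $*$-format and $*$-degree bounds.
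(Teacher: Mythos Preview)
Your overall plan---follow the template of \cite{Pfaffian_cells} with the three substitute ingredients---is the right one, and the base case is fine. But two steps in your inductive argument do not do what you claim, and the mechanism that actually makes the proof work in the presharp setting is missing.

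First, the stratification proposition (\Cref{prop:stratification}) does \emph{not} give strata ``along which the projection to $\R^{\ell-1}$ has locally constant fiber topology''; it only decomposes a set into $C^r$-manifolds with no control on how they sit over the base. Getting constant fiber combinatorics is essentially the whole content of the theorem, not something stratification hands you for free. Second, sharp definable choice (\Cref{prop:sharp_definable_choice}) is not what produces the individual graph sections over a base cell. It returns a \emph{single} section of a family; it cannot enumerate the $\sim D$ many ordered graphs over $C'$ without reintroducing the format blow-up of \Cref{rem:bad_cellular_decomp}. In the paper definable choice is used only in a side role, to absorb the top-dimensional strata $\Pi_{k+1}$ into the lower ones.

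What is actually needed, and what your sketch omits, is the paper's pairwise mechanism: for each pair $X_\alpha,X_\beta$ one forms the incidence sets $X_{\alpha,\beta}$ and $Z_{\alpha,\beta}$, stratifies them, and adds their low-dimensional strata and frontiers to a ``bad locus'' $\cB$ of strictly smaller projected dimension. Away from $\pi_{n-1}(\cB)$ any two sections are then globally comparable, so the graphs over each base cell are automatically ordered and each resulting cell (a graph or a band) is described using at most \emph{two} of the $X_\alpha$. This ``only pairs'' feature is exactly why the presharp axioms (P4)--(P5) suffice and is the heart of the argument. Handling $\cB$ in turn forces a lexicographic induction on $(n,k)$, with $k=\max_\alpha\dim\pi_{n-1}(X_\alpha)$, rather than a straight induction on $\ell$; the graph cells themselves are then obtained as connected components of $\Omega$-defined sets---this is where $\Omega^*$ enters---not via definable choice.
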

In particular one can show that $\Omega^{*}$ is equivalent to a weakly-sharp FD-filtration on $\cS$ with \s CD, and by \Cref{prop:Wsharp_SCD_isSharp} we conclude that $\Omega^{*}$ is equivalent to a \so-minimal filtration on $\cS$ with \s CD. Thus, we obtain the following theorem, a more precise version of our main result, \Cref{thm:for_show}. A detailed proof of \Cref{prop:main_result} is in \Cref{sec:main_proof}.
\begin{thm}
\label{prop:main_result}
Let $(\cS,\Omega)$ be a P\so-minimal structure. Then $\Omega^{*}$ is equivalent to a filtration making $\cS$ into a \so-minimal structure with \s CD.
\end{thm}
\begin{rem}
In the spirit of \Cref{rem:star_counting}, the above proposition essentially means that one can always assume \s CD when working with a \so-minimal structure. 
\end{rem}
 The following is an easy consequence of \Cref{thm:*omega_effective_cellular_decomposition}.
\begin{prop}
Let $(\cS,\Omega)$ be a \so-minimal structure. Then it has sharp cellular decomposition if and only if $\Omega^{*}\leq\Omega$. In particular, $\Omega^{*}$ is equivalent to $\left(\Omega^{*}\right)^{*}$. 
\end{prop}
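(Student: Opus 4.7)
The plan is to prove the two directions of the equivalence separately, using \Cref{thm:*omega_effective_cellular_decomposition} together with the sharp axioms, and then to derive the ``in particular'' clause by applying the equivalence to the auxiliary filtration supplied by \Cref{prop:main_result}.

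For the direction ``\s CD $\Rightarrow\Omega^{*}\leq\Omega$'', I would unpack a set $X\in\Omega^{*}_{\cF,D}$ as $X=\cup_{\alpha}\pi^{\ell_{\alpha}}_{\ell}(X^{\circ}_{\alpha})$ with $X_{\alpha}\in\Omega_{\cF_{\alpha},D_{\alpha}}$, $\cF=\max_\alpha\cF_{\alpha}$, and $D=\sum_\alpha D_{\alpha}$. Applying \s CD to each $X_{\alpha}$ produces a compatible cell decomposition of $\R^{\ell_\alpha}$ with $\poly_{\cF}(D)$ cells, each of format $\format$ and degree $\degree$. The connected component $X^{\circ}_{\alpha}$ is then a union of at most $\poly_{\cF}(D)$ of these cells, hence lies in $\Omega_{\format,\degree}$ by (W4); axiom (\s3) preserves the format under projection; and a final application of (W4) over the (at most $D$-many) indices $\alpha$ places $X$ in $\Omega_{\format,\degree}$.

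For the direction ``$\Omega^{*}\leq\Omega\Rightarrow$ \s CD'', I would use the observation from the paragraph before the statement that $\Omega\leq\Omega^{*}$ always holds, so any collection $X_{1},\ldots,X_{k}\in\Omega_{\cF,D}$ has $*$-format $\format$ and $*$-degree $\degree$. Then \Cref{thm:*omega_effective_cellular_decomposition} supplies a compatible cell decomposition into $\poly_{\cF}(k,D)$ cells of $*$-format $\format$ and $*$-degree $\degree$, and the hypothesis $\Omega^{*}\leq\Omega$ translates these into cells in $\Omega_{\format,\degree}$, which is precisely \s CD.

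For the ``in particular'' clause, one first checks directly from the definition that the $*$-construction is monotone under reductions: $\Omega_{1}\leq\Omega_{2}$ implies $\Omega_{1}^{*}\leq\Omega_{2}^{*}$, the only subtlety being that the number of summands in a $*$-representation is controlled by the $*$-degree, so that the polynomial growth of the reduction is absorbed. \Cref{prop:main_result} furnishes a filtration $\Omega'$ equivalent to $\Omega^{*}$ making $\cS$ into a \so-minimal structure with \s CD, and the already-proved direction applied to $\Omega'$ yields $(\Omega')^{*}\leq\Omega'$; combining monotonicity with the equivalence of $\Omega^{*}$ and $\Omega'$ gives $(\Omega^{*})^{*}\leq\Omega^{*}$, and the reverse inequality is automatic. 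The main obstacle throughout is pure bookkeeping, namely ensuring that the compounded polynomial factors arising from (W4), (\s3), the cell counts of \s CD, and the sums over $\alpha$ in the $*$-definition all combine to genuine polynomial bounds in $D$ with format depending only on $\cF$.
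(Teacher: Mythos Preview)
Your proof of the two directions of the equivalence matches the paper's exactly, only with the bookkeeping spelled out in more detail.

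For the ``in particular'' clause the paper gives no separate argument, evidently regarding it as immediate. Your route via \Cref{prop:main_result} and monotonicity of the $*$-construction is correct, but heavier than necessary: one can simply rerun your ``$\Rightarrow$'' argument with $\Omega^{*}$ in place of $\Omega$. The point is that $\Omega^{*}$ already has \s CD by \Cref{thm:*omega_effective_cellular_decomposition}, and the only structural axioms used in that direction---closure under finite unions and under projection without raising format---hold for $\Omega^{*}$ directly from its definition (a union of unions of projected components is again such a union, and composing projections is again a projection). This yields $(\Omega^{*})^{*}\le\Omega^{*}$ immediately, without detouring through an auxiliary equivalent filtration or verifying monotonicity of $*$. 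Your approach has the virtue of isolating monotonicity of $*$ as a standalone fact, which may be useful elsewhere; the paper's implicit approach is shorter here because it exploits the concrete form of $\Omega^{*}$.
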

\begin{proof}
If $\Omega^{*}\leq\Omega$ then $\Omega,\Omega^{*}$ are equivalent and since $\Omega^{*}$ has \s CD, so does $\Omega$. Assume now that $(\cS,\Omega)$ is a \so-minimal structure with \s CD. Then it follows from \s CD that if $X\in\Omega_{\cF,D}$ and $X^{\circ}\subset X$ is a connected component, then $X^{\circ}\in\Omega_{\format,\degree}$. It now follows immediately from the definition that $\Omega^{*}\leq\Omega$.
\end{proof}
\subsection{Effectivity}
All of the results of this paper are effective in the following sense. 
\begin{defi}
    A \so-minimal (resp. W\so-minimal, P\so-minimal) structure is effective if the polynomial $P_{\cF}(D)$ from axiom (\s1) (resp. (W1),(P1)) is a primitive recursive function of $\cF$. Similarly, a reduction $\Omega\le\Omega'$ is effective if $a(\cF)$ and $P_{\cF}(D)$ from \Cref{defi:reduction_of_filtration} are primitive recursive functions of $\cF$.
\end{defi}
 Assuming effectivity, all the reductions constructed in this paper are effective, and all appearances of $\format,\degree$ (perhaps with dependence on other variables such as $r$) are primitive recursive functions of $\cF$ (and the other variables). 
\subsection{Structure of this paper}
In \Cref{sec:sharp_stratification} we discuss derivatives of functions definable in presharp structures and prove a sharp stratification result. We also review the notion of \emph{sharp derivatives}, first introduced in \cite{Wilkie_conj_Proof}. In \Cref{sec:sharp_definable_choice} we prove a sharp version of definable choice. Note that while a sharp version of definable choice was proved in \cite{Wilkie_conj_Proof}, it was under the assumption of \s CD. Sharp definable choice, along with the results of, \Cref{sec:bound_on_H_0}, \Cref{sec:sharp_stratification} are needed to prove \Cref{thm:*omega_effective_cellular_decomposition}, so we must prove sharp definable choice and the other results from these sections without \s CD. Note that in \cite{Pfaffian_cells} the use of sharp definable choice is circumvented by a similar result \cite{Pfaffian_cells}[Lemma 18] the authors call \say{effective fiber cutting}. In \Cref{sec:bound_on_H_0} we prove \Cref{prop:bound_on_H_0}. In \Cref{sec:main_proof} we provide a sketch of a proof for \Cref{thm:*omega_effective_cellular_decomposition}, and we prove \Cref{prop:Wsharp_SCD_isSharp} and \Cref{prop:main_result}. As an application of \Cref{thm:*omega_effective_cellular_decomposition}, in \Cref{sec:Sharp_Triangulation} we show that a \so-minimal structure with \s CD has sharp triangulation, and deduce bounds on the Betti numbers of a definable set in any presharp structure.
\section{Stratification}
\label{sec:sharp_stratification}
Let $r$ be a positive integer, and fix a presharp structure $\left(\cS,\Omega\right)$. Then we have the following.
\begin{prop}
\label{prop:effectivity_of_derivatives}
Let $f:\bbR^{\ell}\to\bbR^{k}$ be a definable map of format $\cF$ and degree $D$. Then $f$ is $C^{r}$ outside a definable set $V$ of codimension $\geq 1$ of format $\rformat$ and degree $\poly_{\cF}(D,k,r)$.
\end{prop}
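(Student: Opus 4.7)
The plan is to express the ``bad set'' $V\subset\R^{\ell}$ of points where $f$ fails to be $C^r$ as a first-order formula whose only atomic predicate is the graph $\Gamma_f$, and whose structural complexity (parse-tree depth, number of variables, number of atom occurrences) is bounded by a function of $\cF$ and $r$ alone. Applying Proposition \ref{prop:fordeg_of_formula_set} in the P\so-minimal case (which uses the P-format $\max\{\cF,d\}$) then yields the required bounds on the format and degree of $V$; the codimension statement will follow from classical o-minimal cell decomposition.

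First, reduce to the scalar case $k=1$. For each $i\in\{1,\dots,k\}$, the graph $\Gamma_{f_i}$ of the $i$-th component is defined from $\Gamma_f$ by a formula of bounded complexity (existentially hiding the remaining output coordinates), hence lies in $\Omega_{\format,\degree}$ uniformly in $k$. Since $f$ is $C^r$ at a point iff each $f_i$ is, the bad set of $f$ is the union over $i$ of the bad sets of the components; iterated use of (P4) then passes a factor of $k$ into the degree while leaving the format bound of the shape $\rformat$.

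Assume now $k=1$. For each multi-index $\alpha\in\N^{\ell}$ with $|\alpha|\leq r$, recursively build a formula $\phi_\alpha(x,y)$ asserting ``$\partial^{\alpha}f(x)$ exists and equals $y$''. Set $\phi_0(x,y)\equiv((x,y)\in\Gamma_f)$, and define $\phi_{\alpha+e_i}(x,y)$ by the usual $\forall\epsilon\,\exists\delta$ formula stating that the difference quotient $(y_2-y_1)/h$, where $\phi_\alpha(x,y_1)$ and $\phi_\alpha(x+he_i,y_2)$, tends to $y$ as $h\to 0$. Each recursion step invokes $\phi_\alpha$ a bounded number of times and introduces a bounded number of new quantified variables. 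The formula $\psi_r(x)$ defining $V$ is the disjunction, taken over $|\alpha|\leq r$, of the two conditions ``no $y$ satisfying $\phi_\alpha(x,y)$ exists on a neighborhood of $x$'' and ``the partial function defined by $\phi_\alpha$ is discontinuous at $x$''. By construction $\psi_r$ uses only $\Gamma_f$ as an atomic predicate, with $\poly_{\cF,r}(1)$ many occurrences, has parse-tree depth $O_{\cF,r}(1)$, and involves $O_{\cF,r}(1)$ variables in total. Proposition \ref{prop:fordeg_of_formula_set} then places $V\in\Omega_{\rformat,\poly_{\cF,r}(D)}$; reintroducing the factor $k$ from the first reduction gives $V\in\Omega_{\rformat,\poly_{\cF}(D,k,r)}$. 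Finally $\dim V<\ell$ by classical o-minimality, so $V$ has codimension at least $1$.

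The only real obstacle is the careful bookkeeping in the recursive construction of $\phi_\alpha$: one must ensure that $\phi_\alpha$ appears only a bounded (rather than $\ell$-fold) number of times inside $\phi_{\alpha+e_i}$ and that quantified variables are reused between levels whenever possible, so that after $r$ iterations the total number of $\Gamma_f$ occurrences and of variables remains $O_{\cF,r}(1)$. Once this is arranged, the bounds emerge mechanically from Proposition \ref{prop:fordeg_of_formula_set}, and nothing beyond the axioms (P1)--(P6) of a P\so-minimal structure is used.
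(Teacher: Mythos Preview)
Your proof is correct and follows essentially the same approach as the paper's: write the non-$C^r$ locus as a first-order formula whose only atom is $\Gamma_f$, with parse-tree depth and variable count bounded in terms of $\cF$ and $r$, then invoke Proposition~\ref{prop:fordeg_of_formula_set} and use $k\le\cF$ (from axiom (P2) applied to $\Gamma_f\subset\R^{\ell+k}$) to absorb the $k$-dependence into the format bound. The paper's own proof writes out only the case $r=1$ explicitly and leaves the induction on $r$ to the reader, whereas you have sketched that recursion; the content is the same.
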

 \begin{proof}
 We prove it for $r=1$ and leave the general case for the reader. Fix $1\leq i\leq k$. Then the set $A_i=\{x\in\R^{\ell}|f_{i}$ is differentiable at $x\}$ can be given by the following formula:
 \begin{equation}
     \exists L_{i}\;\forall\epsilon>0\;\exists\delta>0\;\forall y\; \left(|y-x|<\delta\to\left|f_i(y)-f_i(x)-L_i\cdot (y-x)\right|<\epsilon|y-x|\right),
 \end{equation}
 where $L_{i}$ should be understood as a tuple of $\ell$ variables. This formula has $O(\ell)$ variables, and its atoms are all of format $O_{\cF}(1)$ and degree $O(D)$. Moreover, the depth of its parse tree is easily seen to be bounded by $O(\ell)$.
 Thus, by \Cref{prop:fordeg_of_formula_set} $A_i$ has format $O_{\cF}(1)$ and degree $\degree$. Therefore $\cap_{i}A_{i}$ has format $O_{\cF,k}(1)\le O_{\cF}(1)$ and degree $\poly_{\cF}(D,k)$. By o-minimality, $\R^{\ell}\setminus A_{i}$ has codimension $\geq 1$, and thus $V:=\R^{\ell}\backslash\cap_{i}A_{i}$ has codimension $\geq 1$.
\end{proof}
\begin{rem}
    Technically, the degrees of the sets $A_{i}$ in the above proof are bounded by $\poly_{\ell}(D)$, which theoretically could be smaller than $\degree$ (recall axiom (P2)). However, the main idea of this theory is to eventually apply axiom (P1), so that inevitably $\cF$ and $\ell$ will mix. More precisely, if say $\widetilde{A_{i}}$ is the projection of $A_{i}$ to $\R$, then it has format $\cF+\ell-1$ and degree $\poly_{\ell}(D)$, so the number of connected components is still bounded by $\degree$, even when $\cF$ does not appear in the degree of $\widetilde{A_{i}}$. In this sense, the bounds $\poly_{\ell}(D)$ and $\degree$ are of very similar nature, and we will not bother to distinguish between them in the future.
\end{rem}
Given a positive integer $r$ and a set $X\subset\bbR^{\ell}$, it is always possible to stratify $X$ in the following sense. 
\begin{prop}
\label{prop:stratification}
Let $X\subset\bbR^{\ell}$ be a $\mu$-dimensional definable set of format $\cF$, degree $D$, and let $r$ be a positive integer. Then there exists a stratification $X=X_{1}\cup\dots\cup X_{\mu}$ of $X$ where each $X_{i}\in\Omega_{\rformat,\grdegree}$ is a (possibly disconnected) $C^{r}$  smooth embedded submanifold of $\bbR^{\ell}$. 
\end{prop}
\begin{rem}
    Note that \cite[Proposition 7]{Wilkie_conj_Proof} is very similar. The key difference is that in \cite{Wilkie_conj_Proof} the proposition is stated and proved for W\so-minimal structures, while we need it for presharp structures. Moreover, \cite[Proposition 7] {Wilkie_conj_Proof} is proved under the assumption of \s CD (in order to bound the number of connected components of the \say{regular} part of $X$), and thus it results in $\degree$ connected strata, rather than $\mu$ possibly disconnected strata. In the present paper, we first prove the version stated above, and as a corollary, we obtain the same formulation as in \cite[Proposition 7]{Wilkie_conj_Proof}, see \Cref{cor:dy_kvar} below. Finally, by a $C^{r}$-smooth embedded submanifold of $\R^{\ell}$, we mean a set $M\subset\R^{\ell}$, such that for every point $x\in M$ there exists a neighborhood $U$ of $x$ in $\R^{\ell}$ such that $U\cap M$ is the graph of a $C^{r}$ map. 
    \end{rem}
    \begin{proof}[Proof of \Cref{prop:stratification}]
     The proof is by induction on $\mu$. For $\mu=0$ there is nothing to prove. Let $X_{\textnormal{reg}}$ be the set of points $x\in X$ with a neighborhood $U$ such that $X\cap U$ is a $\mu$-dimensional $C^{r}$ embedded submanifold of $\R^{\ell}$. As this can be easily expressed in \say{$\epsilon-\delta$} language, we leave it for the reader to verify that $X_{\textnormal{reg}}$ is definable of format $\rformat$ and degree $\rdegree$. By o-minimality, $X\setminus X_{\textnormal{reg}}$ has dimension $<\mu$, and so for it we can use the induction hypothesis, while $X_{\textnormal{reg}}$ is already a smooth embedded $C^{r}$ submanifold.
    \end{proof}
    \begin{cor}
    \label{cor:dy_kvar}
        Let $X\subset\bbR^{\ell}$ be a $\mu$-dimensional definable set of format $\cF$, degree $D$ and let $r$ be a positive integer. Then there exists a stratification $X=X_{1}\cup\dots\cup X_{s}$ of $X$ where $s=\rdegree$ and each $X_{i}\in\Omega_{\rformat,\grdegree}$ is a connected $C^{r}$ smooth embedded submanifold of $\bbR^{\ell}$. 
    \end{cor}
    \begin{proof}
        This follows immediately from \Cref{prop:stratification} and \Cref{prop:bound_on_H_0}. Indeed, one simply needs to apply \Cref{prop:bound_on_H_0} to every one of the strata of $X$ coming from \Cref{prop:stratification}.
    \end{proof}
%\begin{proof}
%The proof is by induction on dimension. Let $X_{\textnormal{reg}}\subset X$ be the set of points near which $X$ is a $C^{r}$ manifold. By an argument similar to that of \Cref{prop:effectivity_of_derivatives}, $X_{\textnormal{reg}}$ has format $\rformat$ and degree $\grdegree$, and in the case of sharp derivatives, format $\format$ and degree $\grdegree$. By o-minimality $\dim X\backslash X_{\textnormal{reg}}<\dim  X$, so we can apply the induction hypothesis on $X\backslash X_{\textnormal{reg}}$, and we are done.
%\end{proof}
\begin{rem}
In \cite{Forts}, a similar result, more compatible with cellular decomposition, is proved. In particular, we may assume that the $X_{i}$ form a cellular decomposition of $X$.
\end{rem}
Similarly, it follows that the derivatives of $f$, where defined, have format $\rformat$ and degree $\poly_{\cF}(D,r)$. The fact that the format of $f^{(r)}$ depends on $r$ can be very restrictive in applications, and so far it seems generally unavoidable. In structures like $\R_{\alg}$ and $\R_{\rpf}$, however, the format of the derivatives is independent of $r$. We therefore recall the notion of \emph{sharp derivatives}, first introduced in \cite{Wilkie_conj_Proof}.
\begin{defi}
\label{defi:sharp_derivatives}
Let $\cS$ be an o-minimal structure and $\Omega$ be an FD filtration. We say that $(\cS,\Omega)$ has \emph{sharp derivatives} if for every
  $\cF\in\N$ there are
  \begin{align}
    a_\cF&\in\N, &  b_\cF&\in\N[x,y]
  \end{align}
  such that the following holds. For every definable $f:\R^n\to\R$ of format $\cF$ and degree $D$ and for every 
$\alpha\in\Z_{\ge0}^n$, one has
  \begin{equation}
    f^{(\alpha)}\in\Omega_{a_\cF,b_\cF(D,|\alpha|)},
  \end{equation}
 where by $f^{(\alpha)}$ we mean that $f$ is restricted to the locus where it is in $C^{|\alpha|}$.
\end{defi}
\begin{rem}
    In \Cref{prop:stratification} above, if the structure has sharp derivatives, then the strata $X_{i}$ can be taken to have format $\format$ and degree $\poly_{\cF}(D,r)$. Moreover, only $\poly_{\cF}(D,r)$ strata are needed. 
\end{rem}
\section{Sharp definable choice}
\label{sec:sharp_definable_choice}
 Fix a presharp structure. We prove the following form of sharp definable choice. Our proof is completely inspired by the construction in \cite{van_den_dries_book}.
\begin{prop}
 \label{prop:sharp_definable_choice}
 Let $\{X_{\lambda}\subset\bbR^{\ell}\}_{\lambda\in\Lambda}$ be a definable family whose elements are nonempty, such that the format of the total space $X_{\Lambda}:=\{(\lambda,x)|\;x\in X_{\lambda},\;\lambda\in\Lambda\}$ is $\cF$ and its degree is $D$. Then there exists a definable map $g:\Lambda\to\bbR^{\ell}$ of format $\format$ and degree $\degree$ such that $g(\lambda)\in X_{\lambda}$ for all $\lambda\in\Lambda$.
 \end{prop}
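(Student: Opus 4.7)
The plan is to imitate the standard van den Dries construction of definable choice by induction on $\ell$, being careful to track P-format and degree, and to exploit the fact that axiom (P2) already forces $\ell\le\cF$, so that an $\ell$-fold induction still produces bounds of the form $\format$ and $\degree$.

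For the base case $\ell=1$, the fiber $X_\lambda\subset\R$ is a nonempty definable subset of the line, hence a finite union of points and open intervals. I would write down a single first-order formula $\phi(\lambda,y)$, whose only non-logical atomic predicate is $(\mu,z)\in X_\Lambda$, that selects a canonical element of $X_\lambda$ by case analysis on $a_\lambda:=\inf X_\lambda$: take $y:=a_\lambda$ if $a_\lambda\in X_\lambda$; if $a_\lambda=-\infty$, set $b_\lambda:=\sup\{x:(-\infty,x)\subset X_\lambda\}$ and return $b_\lambda-1$ when finite and $0$ otherwise; if $-\infty<a_\lambda\notin X_\lambda$, then $X_\lambda$ contains an initial interval $(a_\lambda,c_\lambda)$ and I return $(a_\lambda+c_\lambda)/2$ or $a_\lambda+1$ according to whether $c_\lambda$ is finite. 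The number of variables, quantifiers and connectives in $\phi$ is bounded by a universal constant, so \Cref{prop:fordeg_of_formula_set} gives that the graph of $g$ has format $\format$ and degree $\degree$.

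For the inductive step, let $\pi$ be the projection dropping the last coordinate of the $\R^\ell$-factor. The total space $Y_\Lambda:=\pi(X_\Lambda)$ has format $\cF+1$ and degree $D$ by (P3), and every fiber $Y_\lambda$ is nonempty. Induction on $\ell$ produces a selector $h:\Lambda\to\R^{\ell-1}$ of format $\format$ and degree $\degree$ with $h(\lambda)\in Y_\lambda$. Now set $Z_\Lambda:=\{(\lambda,y):(\lambda,h(\lambda),y)\in X_\Lambda\}$, a family of nonempty subsets of $\R$; its total space is defined by a short formula involving only the atomic predicates for $X_\Lambda$ and $\Gamma_h$, so again by \Cref{prop:fordeg_of_formula_set} it has format $\format$ and degree $\degree$. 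Applying the base case yields $f:\Lambda\to\R$ with $f(\lambda)\in Z_\lambda$, and $g(\lambda):=(h(\lambda),f(\lambda))$ is the required selector.

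The main obstacle is the format and degree bookkeeping in the presharp regime. Unlike the sharp case, intersections, projections and negations are not format-preserving, so each inductive step raises the P-format additively by $O_\cF(1)$ and transforms the degree by a fixed polynomial in $D$ whose degree depends on $\cF$. Since $X_\Lambda\subset\R^{\dim\Lambda+\ell}$ forces $\ell\le\cF$ by (P2), iterating $\ell$ times still yields total P-format $\format$ and total degree $\degree$, because a composition of at most $\cF$ polynomials of $\cF$-bounded degree remains an element of $\poly_\cF(D)$. Checking that the parse-tree depth of the final formula defining $g$ also stays $O_\cF(1)$, so that \Cref{prop:fordeg_of_formula_set} applies uniformly, is the only subtle point.
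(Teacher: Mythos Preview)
Your proposal is correct and follows essentially the same approach as the paper's proof: induct on $\ell$, handle $\ell=1$ by a case analysis on $\inf X_\lambda$ and the right endpoint of the leftmost component, and in the inductive step project to $\R^{\ell-1}$, select there, then select in the resulting one-dimensional fiber. Your case split in the base case is a minor variant of the paper's four-way decomposition (you peel off the case $a_\lambda\in X_\lambda$ first), and your explicit discussion of why $\ell\le\cF$ keeps the iterated P-format and degree bounds of the form $\format,\degree$ is a point the paper leaves implicit.
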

 \begin{proof}
 We prove this by induction on $\ell$. First we show the induction step. We use the induction hypothesis on the family $\pi_{\ell-1}(X_{\lambda})\subset \bbR^{\ell-1}$ to obtain a map $g_1:\Lambda\to\R^{\ell-1}$ with $g_1(\lambda)\in \pi_{\ell-1}\left(X_{\lambda}\right)$ for every $\lambda\in\Lambda$. Since $g_1$ has format $\format$ and degree $\degree$, the total space of the family $\{x\in\R: (g_{1}(\lambda),x)\in X_{\Lambda}\}_{\lambda\in \Lambda}$ has format $\format$ and degree $\degree$. Finally, we use the induction hypothesis on this family, producing a map $g_2:\Lambda\to\R$ of format $\format$ and degree $\degree$. Then the map $(g_1,g_2)$ satisfies the requirements of the proposition. \\ 

Finally, we show the statement for $\ell=1$. For any definable $X\subset\bbR$, define $a(X):=\inf X$ and $b(X):=\sup\{x:(a(X),x)\subset X\}$. We decompose $\Lambda$ into the following sets, and define $g$ on them:
\begin{equation*}
\begin{aligned}
    A_{1}=&\{\lambda:a(X_{\lambda})=-\infty,\;b(X_{\lambda})=\infty\},\;g(\lambda)=0, \\
    A_{2}=&\{\lambda:a(X_{\lambda})=-\infty,\; b(X_{\lambda})\in\bbR\},\;g(\lambda)=b(X_{\lambda})-1, \\
    A_{3}=&\{\lambda:a(X_{\lambda})\in\bbR,\;b(X_{\lambda})=\infty\},\;g(\lambda)=a(X_{\lambda})+1, \\
    A_{4}=&\{\lambda:a(X_{\lambda}),b(X_{\lambda})\in\bbR\},\;g(\lambda)=\frac{a(X_{\lambda})+b(X_{\lambda})}{2}.
\end{aligned}
\end{equation*}
It remains to check that $A_{1}, A_{2}, A_{3}, A_{4}$, as well as the functions (of the variable $\lambda$) $a(X_{\lambda}),b(X_{\lambda})$ are of format $\format$ and degree $\degree$. We will only check this for $A_{2}$ and $a(X_{\lambda})$ when restricted to $A_{3}$, and leave the rest for the reader. $A_{2}$ can be described by the formula \begin{equation}
    \left(\forall M\;\exists x\in X_{\lambda}\;x<M\right)\wedge\left(\exists N\;\forall x\in X_{\lambda}\; x<N\right),
\end{equation}
thus due to \Cref{prop:fordeg_of_formula_set}, the format and degree of $A_{2}$ are bounded by $\format, \degree$ respectively.  We proceed to bound the format and degree of $a(X_{\lambda})$ on $A_{3}$. The graph of $a(X_{\lambda})$ on $A_{3}$ is given by the set \begin{equation}
    \{(\lambda,a)\in A_{3}\times\R:\left(\forall x\in X_{\lambda}\;a\leq x\right)\wedge\left(\forall\epsilon>0\exists z\in X_{\lambda}\;a+\epsilon>z\right)\}
\end{equation}
Thus, just as before, the format and degree of $a(X_{\lambda})$ are bounded by $\format, \degree$ respectively. 
 \end{proof}
 \section{Bound on connected components of definable sets in P\so-minimal structures}
\label{sec:bound_on_H_0}
Fix a presharp structure $\left(\cS,\Omega\right)$. The goal of this section is to prove \Cref{prop:bound_on_H_0}. 
\begin{defi}
    For a linear functional $f\in\left(\bbR^{\ell}\right)^{*}$ and a set $X\subset\bbR^{\ell}$, we denote by $X_{f}$ the set of local maxima of $f|_{X}$.
\end{defi}
Note that, in the definition above, if $X$ is definable then so is $X_{f}$. We will need the following lemma, which is a standard exercise in o-minimality.
\begin{lem}
\label{lem:dim_of_local_maxima}
 Let $X\subset\R^{\ell}$ be definable and suppose that $\dim X>0$. Then there exists a functional $f\in\left(\R^{\ell}\right)^{*}$ such that $\dim X_{f}<\dim X$.
\end{lem}
\begin{proof}[Proof of \Cref{prop:bound_on_H_0}]
Suppose first that $X\subset D(R)$, where $D(R)$ is an open disk of radius $R$ around the origin. Let $X=X_{1}\cup\dots\cup X_{N}$ be the decomposition of $X$ into its connected components. The proof is by induction on $\dim X$. The idea is to use \Cref{lem:dim_of_local_maxima} and the induction hypothesis, but the problem is that the $X_{i}$ may have intersecting closures. \\ 

If $\dim X=0$, then there exists a linear functional $f:\bbR^{\ell}\to\bbR$ such that $f|_{X}$ is injective. Thus $X$ and $f(X)$ are of the same (finite) cardinality, but $f(X)$ has format $\format$ and degree $\degree$, so according to (P1), $f(X)$ has at most $\degree$ connected components. \\ 

Now suppose $\dim X>0$. Let $\epsilon>0$, consider $X_{\epsilon}:=\{x\in X|\;d(x,\partial X)<\epsilon\}$ and $Y:=X\backslash X_{\epsilon}$. Clearly $Y$ has format $\format$ and degree $\degree$. We claim that for $\epsilon$ small enough, $Y$ is a union of $N$ sets with disjoint closures. Define $Y_{i}=X_{i}\backslash X_{\epsilon}$, and to ensure that the $Y_{i}$ are nonempty, choose $\epsilon<\underset{i}{\min}\underset{x\in X_{i}}{\sup}{d(x,\partial X)}$. Note that this can be done. Suppose to the contrary that for some $i$ one has $\underset{x\in X_{i}}{\sup}d(x,\partial X)=0$, then $X_{i}$ must belong to  $\overline{\partial X}$. Since a connected component cannot intersect the closure of other connected components, it follows that $\partial X=\cup_{j=1}^{n}\partial X_{j}$. Moreover, it follows that $X_{i}$ must be contained in  $\overline{\partial X_{i}}$, which is impossible since frontier reduces dimension. \\ 

Let us prove that the $Y_{i}$ have disjoint closures. Say $x\in\overline{Y_{i}}\cap\overline{Y_{j}}$, and so $x\in\overline{X}$. If $x\notin X$, then $x\in\partial X$, but $d(\overline{Y_{i}},\partial X)\geq\epsilon$, contradicting $x\in\overline{Y_{i}}$. So $x\in X$, say $x\in X_k$, but since $x\in\overline{X_{i}}$ we conclude that ${X_{i}}\cup{X_{k}}$ is connected. This forces $k=i$, and by repeating the argument that $k=j$. Of course this shows that $Y$ has at least $N$ components. \\ \\ Recall that the zeroth homology counts connected components, in the sense that $H_0(X;\bbR)$ is a vector space with dimension equal to the number of (path connected) components of $X$.
If $\dim Y=\dim\overline{Y}=0$, then we are done because $Y$ has at least $N$ components on the one hand, and on the other hand it has at most $\degree$ components due to the induction hypothesis. Suppose that $\dim{\overline{Y}}>0$, by \Cref{lem:dim_of_local_maxima} there exists a functional $f$ such that $\dim\left(\overline{Y}\right)_{f}<\dim Y$. Certainly, $\left(\overline{Y}\right)_{f}$ has format $\format$ and degree $\degree$. Moreover, since $Y$ is bounded, $\left(\overline{Y}\right)_{f}$ meets every $\overline{Y}_{i}$, and since the $\overline{Y}_{i}$ are disjoint we conclude $N\leq\dim H_{0}\left(\left(\overline{Y}\right)_{f};\R\right)$. As  $\dim\left(\overline{Y}\right)_{f}<\dim Y\leq \dim X$, by induction $N\leq\dim H_{0}(\left(\overline{Y}\right)_{f};\R)\leq\degree$. \\ 

 To end the proof, note that while we assume that $X\subset D(R)$, our bound on $\dim H_0(X;\bbR)$ does not depend on $R$. Assume now that $X$ is not bounded.  Since homology commutes with direct limit, we see that $H_0(X;\bbR)=\underset{\longleftarrow}{\lim}H_0(X\cap D(R);\bbR)$, and so $\dim H_0(X;\bbR)\leq\limsup\dim H_0(X\cap D(R);\bbR)\le\degree$. (The direct limit here is taken with respect to the direct system of inclusions $X\cap D(R)\subset X\cap D(R')$ when $R'>R$.)
\end{proof}
 \section{Proof of the main results}
\label{sec:main_proof}
\subsection{Sketch of the proof of \Cref{thm:*omega_effective_cellular_decomposition}}
\label{subsec:main_proof_1}
We will explain the main ideas and the main steps of the proof, while trying to avoid as many technical details as possible. The interested reader should consult \cite{Pfaffian_cells}[Theorem 1, Proposition 19]. We may replace $\R$ by $I$, using the definable homeomorphism $\frac{x-1/2}{x-x^2}:I\to\R$. For the remainder of \Cref{subsec:main_proof_1}, fix a presharp structure. The following proposition is the key ingredient in the proof.  
\begin{prop}
\label{prop:weird_prop}
 Let $\{X_{\alpha}\}$ be a collection of $N$ definable sets of format $\cF$ and degree $D$ in $I^{\ell}$. Let $n\leq\ell$ be a positive integer, then there exists a cellular decomposition of $I^{n}$ of size $\poly_{\cF}(D,N)$, compatible with the collection $\{\pi_{n}(X_{\alpha})\}$, and whose cells have *-format $\format$ and *-degree $\degree$.   
\end{prop}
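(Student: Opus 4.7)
The plan is to adapt the inductive proof of Binyamini--Vorobjov \cite{Pfaffian_cells}, substituting the Pfaffian-specific inputs with the propositions already in hand: \Cref{prop:bound_on_H_0}, \Cref{prop:stratification}, and \Cref{prop:sharp_definable_choice}. First I reduce to the case $\ell=n$. By \Cref{prop:bound_on_H_0}, each $X_{\alpha}$ splits into at most $\poly_{\cF}(D)$ connected components, so directly from \Cref{defi:sharp_cell_decomposition}'s *-representation (with the identity projection) it has *-format $\format$ and *-degree $\degree$. Projections preserve *-format and *-degree essentially by definition, so $Y_{\alpha}:=\pi_{n}(X_{\alpha})\subset I^{n}$ again has *-format $\format$ and *-degree $\degree$. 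It therefore suffices to decompose $I^{n}$ compatibly with an $N$-family of sets $\{Y_{\alpha}\}$ of *-format $\cF$ and *-degree $D$.

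I would proceed by induction on $n$. In the base case $n=1$, a set of *-degree $D$ is by definition a union of at most $D$ projections of connected sets, hence has at most $D$ connected components; the $\poly_{\cF}(D,N)$ endpoints of all the $Y_{\alpha}$'s cut $I$ into intervals and points, each trivially of *-format $\format$ and *-degree $\degree$. For the inductive step, following the key trick of \cite{Pfaffian_cells}, I would introduce for each $\alpha$ the vertical frontier
\begin{equation*}
V_{\alpha}:=\{(x,y)\in I^{n-1}\times I : y\in\partial\bigl(Y_{\alpha}\cap(\{x\}\times I)\bigr)\},
\end{equation*}
which is cut out by a first-order formula of bounded parse depth in the *-representation of $Y_{\alpha}$ and therefore, tracing through \Cref{prop:fordeg_of_formula_set} together with the fact that the *-filtration satisfies the presharp axioms up to $\format,\degree$, is itself of *-format $\format$ and *-degree $\degree$, with finite fibers over $I^{n-1}$ by o-minimality. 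Setting $V:=\bigcup_{\alpha}V_{\alpha}$, the crucial point is that the $\poly_{\cF}(D,N)$ connected components $V^{(1)},\ldots,V^{(k)}$ of $V$ (whose count is controlled by \Cref{prop:bound_on_H_0}) inherit *-format $\format$ and *-degree $\degree$ \emph{for free} from the very definition of the *-filtration. Using \Cref{prop:stratification} to pass to the smooth top-dimensional stratum of each $V^{(j)}$ and \Cref{prop:sharp_definable_choice} to select continuous representatives, one identifies each (piece of) $V^{(j)}$ as the graph of a continuous function $f_{j}:U_{j}\to I$ over an open $U_{j}\subset I^{n-1}$ of *-format $\format$ and *-degree $\degree$. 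Applying the induction hypothesis to $\{U_{j}\}\cup\{\pi_{n-1}(Y_{\alpha})\}$ (augmented by the auxiliary sets recording places where the fibers of $Y_{\alpha}$ have jumping dimension, extracted via \Cref{prop:effectivity_of_derivatives} and \Cref{prop:stratification}) produces a cylindrical decomposition of $I^{n-1}$ above each of whose cells the relevant $f_{j}$'s are defined, continuous, and totally ordered pointwise; their graphs and the strips between them then lift the decomposition to $I^{n}$.

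The main obstacle is precisely the pathology flagged in \Cref{rem:bad_cellular_decomp}: isolating the $j$-th sheet of the vertical frontier by a counting formula would force the format to grow linearly in $D$, which destroys the scheme. The entire argument hinges on the fact that the $V^{(j)}$ are obtained as connected components of $V$, and that in the *-filtration this operation is free. Beyond this conceptual heart of the matter, the remaining work is bookkeeping: propagating *-format and *-degree bounds through the vertical-frontier formula, detecting fiber-dimension degenerations, and making continuous choices on each lower-dimensional cell — tasks handled respectively by the presharp axioms, \Cref{prop:effectivity_of_derivatives} combined with \Cref{prop:stratification}, and \Cref{prop:sharp_definable_choice}. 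These are the three ingredients the authors explicitly flag as the replacements for the Pfaffian-specific facts of \cite{Pfaffian_cells}, which is why the proof can be said to go through verbatim.
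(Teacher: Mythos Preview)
There is a genuine gap, and it is precisely the one the paper's construction is designed to avoid. Your scheme feeds the domains $U_{j}$ (and the fiber-degeneration loci) back into the induction hypothesis, but these sets are obtained from \emph{connected components} $V^{(j)}$ and therefore carry only bounded $*$-data, not bounded $\Omega$-data. The induction hypothesis, however, is stated for sets of bounded $\Omega$-format and $\Omega$-degree. You cannot repair this by strengthening the hypothesis to $*$-data: computing the vertical frontier $V_{\alpha}$ of a set with only $*$-bounds requires knowing that $\Omega^{*}$ is closed under complement with the right control, and that closure is a \emph{consequence} of \Cref{thm:*omega_effective_cellular_decomposition}, hence of the very proposition you are proving. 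Your sentence ``the $*$-filtration satisfies the presharp axioms up to $\format,\degree$'' is exactly this circularity. Relatedly, even once $V$ is stratified, a connected component of a stratum need not be a graph over $I^{n-1}$, so the passage from $V^{(j)}$ to the sections $f_{j}$ is not as direct as you suggest.

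The paper's argument is organized quite differently. It never reduces to $\ell=n$, never passes to $*$-data for the input, and never takes connected components mid-construction. Instead it runs a lexicographic $(n,k)$-induction with $k=\max_{\alpha}\dim\pi_{n-1}(X_{\alpha})$, and isolates the bad locus via the \emph{pairwise} sets
\[
X_{\alpha,\beta}=\{(x,y)\in X_{\alpha}\times X_{\beta}:x_{1}=y_{1},\dots,x_{n-1}=y_{n-1}\},\qquad
Z_{\alpha,\beta}=\{(x,y)\in X_{\alpha,\beta}:x_{n}=y_{n}\},
\]
which, being defined by bounded formulas in at most two of the original $X_{\alpha}$'s, remain $\Omega$-bounded and can be stratified and fed into the induction. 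The point stressed repeatedly in the sketch --- ``we only had to intersect two sets per singular point'' --- is what makes the presharp axioms suffice. The $*$-filtration enters only at the very end, when the output cells over a base cell $\cC$ are read off as graphs and bands; each such cell is a projection of a connected component of an $\Omega$-bounded set, hence $*$-bounded by definition. Your vertical-frontier approach is essentially the classical van den Dries argument, and \Cref{rem:bad_cellular_decomp} already explains why that route cannot be made sharp without the pairwise detour.
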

Let us show how \Cref{thm:*omega_effective_cellular_decomposition} follows from \Cref{prop:weird_prop}. Given $X_{1},\dots,X_{k}\subset I^{n}$ in $\Omega^{*}_{\cF,D}$, according to \Cref{defi:star_fordeg}, each $X_{i}$ is a union of projections of connected components of definable sets $X_{\alpha,i}\in\Omega_{\cF_{\alpha,i},D_{\alpha,i}}$, and $\cF=\underset{\alpha}{\max}\cF_{\alpha,i},\;D=\underset{\alpha}\Sigma D_{\alpha,i}$ for every $i$. We may suppose without loss of generality that the $X_{\alpha,i}$ are contained in the same ambient space $I^{\ell}$ (by replacing them with $X_{\alpha,i}\times I^{t}$ for a suitable $t$). Indeed, according to (P2) this will increase the format of every $X_{\alpha,i}$ by at most $\cF$. Now we can use \Cref{prop:weird_prop} on all the sets $X_{\alpha,i}$, producing a cellular decomposition compatible with $X_{\alpha,i}$ for every $\alpha$ and $i$. Then it must also be compatible with the connected components of $X_{\alpha,i}$, and the projections of the cells to $I^{n}$ will be compatible with the sets $X_{i}$.

We next turn to prove \Cref{prop:weird_prop}. The proof is by lexicographic induction on $(n,k)$ where $k:=\underset{\alpha}{\max}\dim\left(\pi_{n-1}(X_{\alpha})\right)$. For a collection $\Pi$ of subsets of $I^{\ell}$, let $\bigcup\Pi$ be the union of the sets in $\Pi$, and for a positive integer $n\leq\ell$ denote $\pi_{n}(\Pi):=\{\pi_n(X)|X\in\Pi\}$. For a positive integer $t$, we denote by $\Pi_{<t}$ and $\Pi_{t}$ to be the collection of sets in $\Pi$ whose dimension is smaller than $t$ and whose dimension is equal to $t$, respectively, and moreover let $\Pi_{\leq t}:=\Pi_{<t}\cup\Pi_{t}$ denote their union. \\ 
\textbf{Step 1}: By \Cref{prop:sharp_definable_choice}, we may assume that for every $\alpha$ the projection $\pi_{n}|_{X_{\alpha}}$ has finite fibers . We stratify the sets $X_{\alpha}$ as in \Cref{prop:stratification}, so that we may assume that the collection $\Pi:=\{X_{\alpha}\}$ consists of $C^{1}$-smooth pure dimensional manifolds, and for a technical reason we also use \Cref{lem:frontier} in order to assume that $\Pi$ is closed under taking frontiers. Since for each $\alpha$ the fibers of $\pi_{n}|_{X\alpha}$ are zero-dimensional, we may also assume that for every $\alpha$ the map $\pi_{n}|_{X_{\alpha}}$ has constant rank $\dim\pi_n(X_{\alpha})$, and that the map $\pi_{n-1}|_{X_{\alpha}}$ has constant rank $k$. The first step is to remove the singularities of $\cup{\pi_{n-1}(X_{\alpha})}$ by removing sets of dimension $<k$. More specifically, we will define a collection $\cB$ of closed definable sets of bounded *-format and *-degree such that $\dim\bigcup\pi_{n-1}(\cB)<k$. The idea is that $\Pi$ is well behaved above $\bigcup\pi_{n-1}(\Pi)\setminus\bigcup\pi_{n-1}(\cB)$, so that a cellular decomposition can be explicitly constructed, and over $\bigcup\pi_{n-1}(\cB)$ we will simply construct cells by induction. We begin by putting the closures of the sets in $\Pi_{<k}$ into $\cB$.

By sharp definable choice, i.e by \Cref{prop:sharp_definable_choice} we may assume that $\bigcup\pi_{n-1}(\Pi_{k+1})\subset\bigcup\pi_{n-1}(\Pi_{<k}\cup\Pi_{k})$, and so for the purpose of this step we may ignore $\Pi_{k+1}$. Let us analyze $Y:=\bigcup\pi_{n-1}(\Pi_k)$. As $\pi_{n-1}|_{X_{\alpha}}$ has constant rank $k$, $Y$ can be thought as an immersed manifold with self-intersections, so if a point $p\in Y$ is not smooth then one of the following must hold.
\begin{enumerate}
    \item There are two sets $X_{\alpha},X_{\beta}\in\Pi_k$ and two points $p_{\alpha}\in X_{\alpha},\;p_{\beta}\in X_{\beta}$ such that $\pi_{n-1}(p_{\alpha})=\pi_{n-1}(p_{\beta})=p$, but  the projections of the germs $(X_{\alpha},p_{\alpha}),\;(X_{\beta},p_{\beta})$ to $I^{n-1}$ are different.
    \item The point $p$ is in $\bigcup\pi_{n-1}\left(\{\partial X|X\in\Pi_k\}\right)$.
\end{enumerate}
The second case is handled by induction - since frontier reduces dimension, we may add the frontiers of the sets in $\Pi_{\le k}$ to $\cB$. Let us now explain how to deal with the first case. Consider the following set,
\begin{equation}
    X_{\alpha,\beta}:=\{(x,y)\in X_{\alpha}\times X_{\beta}|x_1=y_1,\dots,x_{n-1}=y_{n-1}\}\subset I^{\ell}\times I^{\ell},
\end{equation}
which in particular contains the point $(p_{\alpha},p_{\beta})$. Let us stratify $X_{\alpha,\beta}$. We claim that $(p_{\alpha},p_{\beta})$ must lie in a stratum of dimension $<k$. Indeed, first notice that $X_{\alpha,\beta}$ has discrete fibers over $\pi_{n-1}(X_{\alpha})$ (under projection on the coordinates $x_1,\dots,x_{n-1})$, and therefore its strata are of dimension at most $k$. Moreover, $(p_{\alpha},p_{\beta})$ cannot lie in a $k$-dimensional stratum of $Y$, since then the germ of $Y$ at $(p_{\alpha},p_{\beta})$ is diffeomorphically mapped to both the germ of $\pi_{n-1}(X_{\alpha})$ and the germ of $\pi_{n-1}(X_{\beta})$ at $p$, which were assumed to be different. A contradiction. 

We now add the strata of $X_{\alpha,\beta}$ of dimension $<k$ into $\cB$. Note crucially that to remove the singularities of $\cup{\pi_{n-1}(X_{\alpha})}$ we only had to intersect two sets per singular point (of course, while the germ at a singular point $p$ can be a union of more than two projections of germs of sets $X_{\alpha}\in\Pi_{k}$, it only takes two of them to remove $p$). Moreover, note that there are polynomially many such pairs, so to get the required bounds on the number and complexity of the strata it is enough to assume that the structure was presharp.

We finish this step by adding some more sets to $\cB$ for future use. For any $X_{\alpha},X_{\beta}\in\Pi_{k}$ consider
\begin{equation}
    Z_{\alpha,\beta}:=\{(x,y)\in X_{\alpha,\beta}| x_n=y_n\}.
\end{equation}
We stratify $Z_{\alpha,\beta}$ and add the strata of dimension $<k$, as well as the frontiers of the $k$ dimensional strata to $\cB$. \\
\textbf{Step 2}:
We go on to construct the cylindrical decomposition. We will use the induction hypothesis to construct a cylindrical decomposition of $I^{\ell-1}$ compatible with $\pi_{n-1}(\Pi)$ and $\pi_{n-1}(\cB)$. Since $\pi_{n-1}(\cB)$ has dimension $<k$, if $\cC$ is a cell contained in $\pi_{n-1}(\cB)$, we construct the required cells over $\cC$ by induction. Now let $\cC$ be a cell contained in $\bigcup\pi_{n-1}(\Pi)\backslash\pi_{n-1}(\cB)$. We need to construct cells over $\cC$ compatible with $\pi_{n}(\Pi)$, and it is not hard to see that it suffices for the cells to be compatible with $\pi_{n}(\Pi_{k})$ in order to be compatible with $\pi_{n}(\Pi)$.

We claim that the set $\cup_{\alpha}\pi_{n}(X_{\alpha})\cap\left(\cC\times I\right)$ is a union of graph cells of the form $\{(x,y):x\in\cC, y=s(x)\}$. Indeed, let $s_\alpha,s_\beta:\cC\to I$ be sections of $\left(\pi_{n}(X_{\alpha})\right)\cap\left(\cC\times I\right),\left(\pi_{n}(X_{\beta})\right)\cap\left(\cC\times I\right)$, respectively. If neither of the three conditions $s_{\alpha}<s_{\beta},s_{\alpha}=s_{\beta},s_{\alpha}>s_{\beta}$ holds globally over $\cC$, then there exists a point $x\in\cC$ such that $s_{\alpha}(x)=s_{\beta}(x)$, but $s_{\alpha},s_{\beta}$ are not identically equal in a neighborhood of $x$ in $\cC$. Then there is a point $(x,s_{\alpha}(x),x_{1},x,s_{\beta}(x),x_{2})$ in $X_{\alpha,\beta}$ that lies on a stratum of $Z_{\alpha,\beta}$ of dimension $< k$, or it lies on the frontier of a $k$-dimensional stratum of $Z_{\alpha,\beta}$, and in either case, this contradicts $\cC\subset\bigcup\pi_{n-1}(\Pi)\backslash\pi_{n-1}(\cB)$. Again, note crucially that we have achieved this while working with at most two sets $X_{\alpha},X_{\beta}$ at any time, so the construction holds in the presharp case. 

Now, let $s_1<\dots<s_{q}$ be all the sections of all the sets $\pi_{n}(X_{\alpha})$ over $\cC$. Then the required cells are just given by 
\begin{multline}
    \{(x,y)\in\cC\times I|0<y<s_{1}(x)\},\;\{(x,y)\in\cC\times I|y=s_1(x)\},\;\dots\\
    \dots,\{(x,y)\in\cC\times I|s_{q}(x)<y<1\}.
\end{multline}
Note crucially that all these sections come from connected components of the sets $\pi_{n}\left(X_{\alpha}\right)\cap\left(\cC\times I\right)$, and so by \Cref{prop:bound_on_H_0} we have $q\leq N\cdot\degree$. Finally, with some technicalities, one can bound the *-format and *-degree of these cells. Note once more that each cell is defined by at most two sections, so the construction provides the required bounds under the assumption of P\so-minimality.
\subsection{Structure Trees}
\label{subsection:Structure_Trees}
In this section, we define the notion of a \emph{structure tree}. It is similar to a parse-tree for a first order formula, but the operations allowed are in correspondence to the operations in the axioms of o-minimality, rather than to the standard operations for first-order formulae. Let us fix some notation. A \emph{rooted tree} is a pair $(T,r)$ where $T$ is a tree and $r$ is a vertex of $T$. If $v$ is a vertex of $T$, we consider the neighbors $w$ of $v$ such that $d(w,r)=d(v,r)+1$, and refer to such neighbors as \emph{children}. \\ 
We will now define structure trees, and for future use, we will also define \emph{slanted} structure trees. Fix an o-minimal structure $\cS$.
\begin{defi}[Structure trees]
\label{defi:structure_trees}
A structure tree in $\cS$ is a finite rooted tree $(T,r)$ where for every vertex $v$ of $T$ there is an associated definable set $T_{v}$, such that for every vertex $v$ with children $v_{1},\dots,v_{k}$, the following holds:
\begin{enumerate}
    \item The sets $T_{v_{j}}$ for $j=1,\dots,k$ have the same ambient dimension, i.e there exists a natural number $\ell$ such that $T_{v_{j}}\subset\R^{\ell}$ for every $j$.
    \item If $k>1$, then $T_{v}$ is either the union of, or the intersection of, the sets $T_{v_{1}},\dots,T_{v_{k}}$.
    \item If $k=1$, then $T_{v}$ is one of the following: $\pi_{\ell-1}(T_{v_{1}}),\left(T_{v_{1}}\right)^{c},T_{v_{1}}\times\R$.
\end{enumerate}
A slanted structure tree is defined similarly to a structure tree, the only difference is that in item (3) above, the set $T_{v}$ can also be $\R\times T_{v_{1}}$. We sometimes do not explicitly mention the o-minimal structure in which the structure tree is defined, if it is clear from context.
\end{defi}
Given an FD filtration on $\cS$, we can extend it to filter structure trees in several natural ways. For the purposes of this paper, we need the following definition. 
\begin{defi}[degree and $\Omega$-format of structure trees and slanted structure trees]
Let $\Omega$ be any FD-filtration on $\cS$. We define the $\Omega$-format of structure trees by induction. If $T$ has a single vertex $r$ and the associated set $T_{r}$ is in $\Omega_{\cF,D}$, then the $\Omega$-format of $T$ is defined to be $\cF$.

Let $(T,r)$ be a structure tree, let $v_{1},\dots,v_{k}$ be the children of $r$, and denote by $T_{1},\dots,T_{k}$ the subtrees defined by them. Suppose that $\left(T_{i},v_{i}\right)$ has $\Omega$-format $\cF_{i}$, then: 
\begin{enumerate}
    \item If $k=1$ and $T_{r}=T_{1}\times\R$ ($T_{r}=T_{1}\times\R$ or $T_{r}=\R\times T_{1}$ in the slanted case), then the $\Omega$-format of $T$ is $\cF_{1}+1$.
    \item In any other case, the $\Omega$-format of $T$ is $\max\{\cF_{i}\}$.
\end{enumerate} 
If $A_{j}\in\Omega_{\cF_{j},D_{j}}$ are the sets associated to the leaves of $T$, then the degree of $T$ is defined to be $\sum D_{j}$.
\end{defi} 
\begin{rem}
\label{rem:leaves_are_good}
     Suppose that $(\cS,\Omega)$ is a \so-minimal structure, and let $(T,r)$ be a structure tree in $\cS$ with degree $D$ and $\Omega$-format $\cF$. Then $T_{r}$ is in $\Omega_{\cF,D}$, and in fact, this is the motivation of this definition. Moreover, the following holds for any FD-filtration $\Omega'$ on $\cS$. Denote as before the degree of $(T,r)$ by $D$ (but now with respect to the arbitrary filtration $\Omega'$). If $A_{1},\dots,A_{N}$ are the sets associated to the leaves of $T$, then $N\leq D$ and moreover, for every $1\leq i\leq N$ we have $A_{i}\in\Omega'_{\cF,D}$. 
     %Finally, it is useful to note that the $\Omega$-format of $T$ bounds the depth of $T$ from above. 
     We leave the verification of these comments to the reader. 
\end{rem}
The following proposition is the key ingredient in the proof of \Cref{prop:Wsharp_SCD_isSharp}.
\begin{prop}
 \label{prop:ST_defines_good_set}
 Let $(\cS,\Omega)$ be a W\so-minimal structure with \s CD, and let $(T,r)$ be a structure tree in $\cS$ of $\Omega$-format $\cF$ and degree $D$. Then $T_{r}\in\Omega_{O_{\cF}(1),\degree}$. 
\end{prop}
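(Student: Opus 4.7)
The plan is to exploit \s CD to reduce the entire tree computation to manipulations inside one ambient cylindrical cell decomposition whose cells have uniformly bounded format. The key structural observation is that for every vertex $v$ of $T$, the ambient dimension $\ell_v$ of $T_v$ is bounded by the $\Omega$-format of the subtree rooted at $v$, and in particular by $\cF$. I would verify this by a short induction: leaves satisfy $\ell_v\le\cF_v$ by (P2); union, intersection and complement preserve both ambient dimension and $\Omega$-format; projection strictly decreases ambient dimension while preserving $\Omega$-format; and the $\times\R$ operation raises both by exactly one.

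Next I would lift every leaf $A_i\subset\R^{\ell_i}$ to $\tilde A_i:=A_i\times\R^{\cF-\ell_i}\subset\R^\cF$; iterating (W3) yields $\tilde A_i\in\Omega_{2\cF,D_i}$. Applying \s CD to the family $\{\tilde A_i\}$ in $\R^\cF$ then produces a cylindrical cell decomposition $\cD$ of $\R^\cF$ into $\degree$ cells (the number $m$ of leaves being absorbable into $D$ after a harmless normalization ensuring each leaf has positive degree), each of format $\format$ and degree $\degree$. For every $\ell\le\cF$ the projection of $\cD$ onto $\R^\ell$ produces an induced cylindrical cell decomposition $\cD_\ell$, and since $\tilde A_i$ is the cylinder $A_i\times\R^{\cF-\ell_i}$ a direct verification shows that $\cD_{\ell_i}$ is compatible with the original leaf $A_i$.

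The heart of the argument is then a bottom-up induction on $T$ proving that $T_v$ is a union of cells of $\cD_{\ell_v}$ for every vertex $v$. Leaves are handled above. For union, intersection and complement the statement is immediate because the cells of $\cD_{\ell_v}$ are pairwise disjoint and the operations happen inside a fixed ambient space. For $T_v=\pi_{\ell-1}(T_{v_1})$ the cylindrical property guarantees that the projection of each cell of $\cD_\ell$ contained in $T_{v_1}$ is itself a cell of $\cD_{\ell-1}$, so the projection of a union of cells is again a union of cells. For $T_v=T_{v_1}\times\R$ the cylindrical property ensures that $C\times\R$ is a union of cells of $\cD_{\ell+1}$ for every cell $C$ of $\cD_\ell$ contained in $T_{v_1}$.

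Applying (W4) at the root concludes the proof: $T_r$ is a union of at most $\degree$ cells of format $\format$ and degree $\degree$, giving $T_r\in\Omega_{\format,\degree}$ as required. The main obstacle is conceptual rather than technical: naively each application of complement, intersection or projection raises the $\Omega$-format by one in a W\so-minimal structure, so a depth-$d$ tree might appear to produce format $\cF+d$. \s CD circumvents this by letting us represent every intermediate set uniformly as a union of cells whose format depends only on the ambient dimension of $T_v$ (itself bounded by $\cF$), rather than on the number of operations used to construct $T_v$.
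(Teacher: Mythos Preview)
Your proof is correct and follows essentially the same strategy as the paper's: lift all leaves to a common ambient space, apply \s CD once, and then show by induction over $T$ that every $T_v$ is a union of cells of the induced decomposition at level $\ell_v$. The only cosmetic difference is that you lift to $\R^{\cF}$ (having first established $\ell_v\le\cF$), whereas the paper lifts to $\R^{m}$ with $m=\max_i\ell_i$ and introduces the operator $P_\ell(X)=\pi_{\min\{\ell,m\}}(X)\times\R^{\max\{\ell-m,0\}}$ to handle vertices with $\ell_v>m$; your choice makes this bookkeeping unnecessary.
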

\begin{proof}
Let $A_{i}\subset\R^{\ell_i}$ be the sets associated to the leaves of $T$. Denote $m:=\max\{\ell_i\}$, and for a set $X\subset\R^{m}$ and an integer $\ell$ we denote $P_{\ell}(X):=\pi_{\min\{\ell,m\}}(X)\times\R^{\max\{\ell-m,0\}}$. Let $\cC_1,\dots,\cC_{N}$ be a cellular decomposition of $\R^{m}$ compatible with the sets $A_{i}\times\R^{m-\ell_{i}}$. We claim that for every vertex $v$ of $T$, if the associated set $T_{v}$ is a subset of $\R^{\ell}$, then $P_{\ell}(\cC_1),\dots,P_{\ell}(\cC_{N})$ is a cellular decomposition of $\R^{\ell}$ compatible with $T_{v}$. 

We prove this by descending induction on the distance from $v$ to $r$. If $v$ is a leaf then the claim is clear by definition. Now let $v$ be any vertex, and let $v_1,\dots,v_k$ be its children. Then by the definition of structure trees, one of the following holds.
\begin{enumerate}
    \item If $k>1$, then $T_{v}$ is either the union of, or the intersection of, the sets $T_{v_{1}},\dots,T_{v_{k}}$, but by the inductive hypothesis $P_{\ell}(\cC_{1}),\dots,P_{\ell}(\cC_{N})$ are compatible with $T_{v_{1}},\dots,T_{v_{k}}$, and thus they are compatible with $T_{v}$. 
    \item If $k=1$, then a straightforward check shows the same conclusion.
\end{enumerate}
Since we assumed $(\cS,\Omega)$ has \s CD, and due to the observations made in \Cref{rem:leaves_are_good}, the cells $\cC_{i}$ can be chosen to have format $\format$, degree $\degree$ and their number $N$ can be chosen to be $\degree$. Assume that $T_{r}\subset\R^{n}$, then $\cF\geq n$ (this can be verified by induction on $T$), and the cells $P_{n}(\cC_{i})$ have format $O_{\cF,n}(1)\le O_{\cF}(1)$ and degree $\degree$. Since $T_{r}$ is a union of some of these cells, and crucially since $(\cS,\Omega)$ is W\so-minimal (so that unions do not increase format), $T_{r}$ has format $O_{\cF}(1)$ and degree $\degree$ as well. 
\end{proof}
We end this subsection by noting that slanted structure trees can be used for a constructive definition of sharply generated FD-filtrations, recall \Cref{lem:generated_filtrations}.
\begin{defi}
\label{defi:Omega_ST}
    Let $\cS$ be an o-minimal structure, and $\Omega$ be any FD-filtration on $\cS$ that satisfies axioms (P2) and (P6). We define a new filtration $\Omega^{ST}$ in the following way. A definable set $X$ is in $\Omega^{ST}_{\cF,D}$ if there exists a \emph{slanted} structure tree $\left(T,r\right)$ of $\Omega$-format $\cF$ and degree $D$ such that $T_{r}=X$. 
\end{defi}
Clearly, $\Omega^{ST}$ is an FD-filtration satisfying all the sharp axioms except possibly for (\s 1).
\begin{lem}
    \label{lem:ST_generates_filtrations}
In the notation of \Cref{defi:Omega_ST}, the filtration $\Omega^{ST}$ coincides with the FD-filtration $\Omega'$ sharply generated by $\Omega$.
\end{lem}
\begin{proof}
    The proof is straightforward. One the one hand, $\Omega\subset\Omega^{ST}$ because given a definable set $X$, one can consider a single vertex tree with associated set $X$. Thus, by the minimality of $\Omega'$ we must have $\Omega'\subset\Omega^{ST}$. On the other hand, if $(T,r)$ is a slanted structure tree of $\Omega$-format $\cF$ and degree $D$, then $T_{r}$ is in $\Omega'_{\cF,D}$. This follows immediately from the definition of structure trees, since $\Omega\subset\Omega'$ and $\Omega'$ is a sharp filtration (except possibly for (\s 1)). The latter statement however is exactly equivalent to $\Omega^{ST}\subset\Omega'$.
\end{proof}
\begin{rem}
    In the statement of \Cref{lem:generated_filtrations}, only (P2) is assumed while (P6) is not. Thus, the procedure described above is not sufficient to construct sharply generated filtrations out of general datum. In order to mimic the construction above to work in this greater generality, one would have to, for every semialgebraic set $X\subset\R^{\ell}$ of degree $D$, add a special one-vertex structure tree $(T,r)$ with $T_{r}=X$, and declare that this tree has format $\ell$ and degree $D$.
\end{rem}
\subsection{Proof of \Cref{prop:Wsharp_SCD_isSharp}}
Let $(\cS,\Omega)$ be W\so-minimal with \s CD. We define an FD-filtration $\Omega'$ in the following way. The set $X\subset\R^{n}$ is in $\Omega'_{\cF,D}$ if there exists a structure tree $(T,r)$ of $\Omega$-format $\cF$ and degree $D$ such that $T_{r}=X$ (note the difference with \Cref{lem:ST_generates_filtrations} above, where we also used slanted structure trees). It is clear that $(\cS,\Omega')$ satisfies the axioms of \so-minimal structures, except possibly for (\s 1), and the part of axiom (\s3) having to do with multiplication by $\R$ from the left, i.e. that if $A\in\Omega'_{\cF,D}$ then $\R\times A\in\Omega'_{\cF+1,D}$. The latter follows from the simple observation that if $(T,r)$ is any structure tree, one can replace all the associated sets $T_{v}$ by $\R\times T_{v}$ and obtain a new structure tree of $\Omega$-format greater by $1$ than the $\Omega$-format of $T$.

It is also clear that $\Omega\subset\Omega'$, since given a definable set $X$, one can consider the tree $T$ which is a single vertex with associated set $X$. In particular, $\Omega\leq\Omega'$, and moreover, by \Cref{prop:ST_defines_good_set} we have $\Omega'\le\Omega$, thus we simultaneously obtain the following. 
\begin{enumerate}
    \item The FD-filtration $\Omega'$ satisfies (\s1), or in other words, $(\cS,\Omega')$ is \so-minimal.
    \item The filtrations $\Omega,\Omega'$ are equivalent, therefore $(\cS,\Omega')$ has \s CD. 
\end{enumerate}
\qedsymbol\\
Before turning to the proof of \Cref{prop:main_result}, we need the following lemma on the normalization of format and degree.
\begin{lem}
\label{lem:format_normalized}
Let $\cS$ be an o-minimal expansion of $\R$, and let $\Omega$ be an FD-filtration such that $(\cS,\Omega)$ satisfies the following axioms. \\
If $A\in \Omega_{\cF,D}$ then:
\begin{enumerate}
    \item[(*1)] If $A\subset\bbR$, it has at most $\degree$ connected components.
    \item[(*2)] If $A\subset\bbR^{\ell}$ then $\cF\geq\ell$.
    \item[(*3)] If $A\subset\bbR^{\ell}$ then $\pi_{\ell-1}(A), A\times\bbR,\bbR\times A$ are in $\Omega_{\cF+1,D}$, while $A^{c}\in\Omega_{\format,\degree}$. 
\end{enumerate}
If $A_1,\dots,A_{k}\subset\bbR^{\ell}$ with $A_{i}\in\Omega_{\cF_i,D_i}$ and $\cF:=\max_{i}\{\cF_{i}\},\;D:=\sum_{i}D_i$, then:
\begin{enumerate}
    \item[(*4)]  $\cup_{i}A_{i}\in\Omega_{\cF,D}$,    
    \item[(*5)]  $\cap_{i}A_{i}\in\Omega_{\format,\degree}$.
\end{enumerate}
If $P\in\bbR[x_1,\dots,x_{\ell}]$, then:
\begin{enumerate}
    \item[(*6)] $\{P=0\}\in\Omega_{O_{\ell}(1),\poly_{\ell}(\deg P)}$.
\end{enumerate}
Then $\Omega$ is equivalent to a filtration $\Omega'$ such that $(\cS,\Omega')$ is W\so-minimal. 
\end{lem}
\begin{rem}
    As will be seen from the proof, it is more generally true that in any axiom schema of \so-minimality, axioms that increase format and degree by some function as in the (*)-axioms above, can be reduced via equivalence to axioms that increase the format by $1$. 
\end{rem}
\begin{proof}[Proof of \Cref{lem:format_normalized}]
Denote the functions $\format$ (and $O_{\ell}(1)$ in (*6)) appearing in axioms (*3), (*5) and (*6) by $C_{3},C_{5},C_{6}:\N\to\N$ respectively, and denote the polynomials $\degree$ (and $\poly_{\ell}(\deg P)$ in (*6)) appearing in axioms (*3), (*5) and (*6) by $T_{\cF},Q_{\cF},H_{\ell}$ respectively. \\ 
Define a new FD-filtration $\Omega'$ by $\Omega'_{\cF,D}:=\Omega_{C(\cF),P_{\cF}(D)}\cap\left(\bigcup_{i=1}^{\cF}P(\R^{i})\right)$ where $C:\N\to\N$ and $\{P_{\cF}\}_{\cF\in\N}\subset\N[D]$ will be determined later. Namely, $A$ is in $\Omega'_{\cF,D}$ if $A$ is in $\Omega_{C(\cF),P_{\cF}(D)}$ and $A$ is a subset of one of $\R,\dots,\R^{\cF}$. We will check what are the conditions on $C,P_{\cF}$ that are needed to assure that $(\cS,\Omega')$ is W\so-minimal. It will then be clear that such $C,P_{\cF}$ can be constructed by induction on $\cF$ so that they will satisfy these properties. Note that by definition $\Omega'\leq\Omega$. If we assume in addition that $C(\cF)\geq\cF$ and $P_{\cF}(D)\geq D$, then we obtain $\Omega\subset\Omega'$, so under this assumption we indeed have $\Omega\simeq\Omega'$. \\ 

To guarantee that $\Omega'$ is a filtration, the function $C$ has to be monotone increasing, and per $\cF$ the polynomial $P_{\cF}(D)$ has to be a monotone increasing function of $D$. Let us now carefully check axioms (W1)-(W6). \begin{itemize}
    \item[(W1)] Let $A\subset\R$ be in $\Omega'_{\cF,D}$, so by definition $A$ is in $\Omega_{C(\cF),P_{\cF}(D)}$, and hence has at most $\poly_{C(\cF)}(P_{\cF}(D))=\degree$ connected components. 
    \item[(W2)] Let $A\subset\R^{\ell}$ be in $\Omega'_{\cF,D}$, so by definition $A$ is a subset of one of $\R,\dots,\R^{\cF}$, and in particular $\cF\geq\ell$. 
    \item[(W3)] Let $A\subset\R^{\ell}$ be in $\Omega'_{\cF,D}$, so by definition $A$ is in $\Omega_{C(\cF),P_{\cF}(D)}$. Hence, $\R\times A, A\times\R$ and $\pi_{\ell-1}(A)$ are in $\Omega_{C(\cF)+1,P_{\cF}(D)}$, and if we add the assumption that $C(\cF+1)\geq C(\cF)+1$, we would conclude that $\R\times A, A\times\R,\pi_{\ell-1}(A)$ are in $\Omega'_{\cF+1,D}$. As for $A^{c}$, according to (*3) we have $A^{c}\in\Omega_{C_{3}(C(\cF)),T_{C(\cF)}(P_{\cF}(D))}$, so if we assume $C(\cF+1)\geq C_{3}(C(\cF))$ and $P_{\cF+1}(D)\geq T_{C(\cF)}(P_{\cF}(D))$ we would conclude that $A^{c}$ is in $\Omega'_{\cF+1,D}$. 
    \item[(W4)] Let $A_1,\dots,A_{k}\subset\bbR^{\ell}$ with $A_{i}\in\Omega'_{\cF_i,D_i}$ and $\cF:=\max_{i}\{\cF_{i}\},\;D:=\sum_{i}D_i$. So $A_{i}\in\Omega_{C(\cF_{i}),P_{\cF_{i}}(D_{i})}$ and hence $\cup_{i}A_{i}$ is in $\Omega_{\max_{i}C(\cF_{i}),\sum_{i}P_{\cF_{i}}(D_{i})}$. To guarantee that $\cup{A_i}$ is in $\Omega'_{\cF,D}$, we need the assumption that $C(\cF)\geq\max_{i}C(\cF_{i})$ (which already follows from the assumption that $C$ is strictly monotone increasing), and that $P_{\cF}(D)\geq\sum_{i}P_{\cF_{i}}(D_{i})$. 
    \item[(W5)] Let $A_1,\dots,A_{k}\subset\bbR^{\ell}$ with $A_{i}\in\Omega'_{\cF_i,D_i}$ and $\cF:=\max_{i}\{\cF_{i}\},\;D:=\sum_{i}D_i$. So $A_{i}\in\Omega_{C(\cF_{i}),P_{\cF_{i}}(D_{i})}$ and hence $\cap A_{i}$ is in $\Omega_{C_{5}(\max_{i}C(\cF_{i})),Q_{\cF}(\sum_{i}P_{\cF_{i}}(D_{i}))}$. So, to guarantee that $\cap A_{i}$ is in $\Omega'_{\cF+1,D}$ we need to assume $C(\cF+1)\geq C_{5}\left(\max_{i}C(\cF_{i})\right)$ and that $P_{\cF+1}(D)\geq Q_{\cF}(\sum_{i}P_{\cF_{i}}(D_{i}))$.
    \item[(W6)] Let $P\in\R[x_{1},\dots,x_{\ell}]$ with $d=\deg P$. Then $\{P=0\}$ is in $\Omega_{C_{6}(\ell),H_{\ell}(d)}$, so to guarantee that $\{P=0\}$ is in $\Omega'_{\ell,d}$ we need to assume that $C\geq C_{6}$ and $P_{\ell}(D)\geq H_{\cF}(D)$.
\end{itemize} 
It is clear that $C,P_{\cF}$ can be constructed inductively so that they satisfy all of the conditions outlined above. Moreover, if $(\cS,\Omega)$ is effective in the sense that $C_{3},C_{5},C_{6},P_{\cF},Q_{\cF},H_{\ell}$ are given primitive recursive function, then $C,P_{\cF}$ can be chosen to be primitive recursive functions as well, and hence the reduction to $\Omega'$ is effective. 
\end{proof} 
%As mentioned above, the proof of \Cref{lem:format_normalized} in the W\so-minimal and P\so-minimal case is somewhat more involved. It requires a notion of \say{weak} and \say{pre} $\Omega$-format for structure trees, defined similarly to $\Omega$-format (almost verbatim in fact, one only needs to change the way format and degree are increased at every vertex), so that the format of the set associated to the root of a structure tree is the same as the pre $\Omega$-format or weak $\Omega$-format.  The same conclusion holds - the weak$\backslash$ pre $\Omega$-format of sets in $\Omega'_{\cF,D}$ is bounded by
%\begin{equation}
%    \underset{[\cF]\to\{C\}}{\max}\{C_{\cF}(\dots(C_{1}(\cF))\dots)\},
%\end{equation}
%where the set $\{C\}$ is the set all functions $C(\cF)$ replacing appearances of $\cF+1$ in the axioms that $\Omega$ satisfies, and the maximum is taken over all sequences $\left(C_{1},\dots,C_{\cF}\right)$ of length $\cF$ in this set.
\subsection{Proof of \Cref{prop:main_result}}
Let $(\cS,\Omega)$ be P\so-minimal. We claim that the pair $(\cS,\Omega^{*})$ satisfies axioms (*1)-(*6) in the notation of \Cref{lem:format_normalized}. Indeed, (*1),(*2),(*4) follow directly from the definition of $\Omega^{*}$. The axiom (*5) follows from \Cref{thm:*omega_effective_cellular_decomposition}, indeed, the intersection of a collection of definable sets is the union of certain cells from a cellular decomposition compatible with the collection. Axiom (*3) also follows directly from the definition of $\Omega^{*}$, except for the part about $A^{c}$, which again follows from \Cref{thm:*omega_effective_cellular_decomposition}. Finally, axiom (*6) readily follows from the fact that $\Omega\leq\Omega^{*}$.

 By \Cref{lem:format_normalized} we can define an FD-filtration $\Omega'$ which is equivalent to $\Omega^{*}$, and such that $(\cS,\Omega')$ is W\so-minimal. Moreover, $(\cS,\Omega')$ has \s CD since $\Omega'$ is equivalent to $\Omega^{*}$. We finish by applying \Cref{prop:Wsharp_SCD_isSharp}. \qedsymbol
\section{Sharp Triangulation}
\label{sec:Sharp_Triangulation}
In \cite{Pfaffian_cells}, a sharp triangulation theorem for $\rpfaff$ is deduced from \s CD and the ordinary proof of triangulation in o-minimality as it appears in \cite{Intro_o_min}. The deduction is simple; one needs to verify that the formulas describing the operation in \cite{Intro_o_min} have format $\format$ and degree $\degree$. Thus, we have the following. 
\begin{thm}
\label{thm:sharp_triangulation}
Let $(\cS,\Omega)$ be a \so-minimal stucture with \s CD. Let $Y\subset I^{\ell}$ be closed and definable, $X_{1},\dots,X_{k}\subset Y$ definable subsets such that all these sets have format $\cF$ and degree $D$. Then there exists a simplicial complex $K$ of size $\degree$ with vertices in $\Q^{\ell}$ and a definable homeomorphism $\Phi:|K|\to Y$ of format $\format$ and degree $\degree$, such that each $X_{i}$ is a union of images of simplices. 
\end{thm}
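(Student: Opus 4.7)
The strategy is to transcribe the classical proof of the triangulation theorem from \cite{Intro_o_min} verbatim, and to verify at each step that the formulae and operations involved produce objects of format $\format$ and degree $\degree$. The sharp tools we can invoke are: sharp cellular decomposition (\Cref{defi:sharp_cell_decomposition}), sharp definable choice (\Cref{prop:sharp_definable_choice}), sharp stratification (\Cref{prop:stratification}), the bound on connected components (\Cref{prop:bound_on_H_0}), and the principle that first-order formulae of bounded format and degree define sets of format $\format$ and degree $\degree$ (\Cref{prop:fordeg_of_formula_set}).

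First I would apply \s CD to the family $X,X_1,\ldots,X_k$ to obtain a cylindrical cell decomposition of $I^{\ell}$ consisting of $\degree$ cells, each of format $\format$ and degree $\degree$, compatible with all the given sets. Since $X$ is a union of such cells and each $X_i$ is a sub-union, it suffices to produce a triangulation compatible with the cell decomposition. This is done by induction on $\ell$: the base case $\ell=1$ is immediate. For the inductive step one chooses a linear change of coordinates $\lambda\in\GL_{\ell}(\Q)$ placing the cell decomposition in ``general position'' with respect to the last coordinate, applies the inductive hypothesis to the projection to $I^{\ell-1}$ to obtain a triangulation $K'$, and then lifts $K'$ to a triangulation $K$ of $X$ by introducing vertices above the rational vertices of $K'$ at heights determined by the cell-defining functions of the cylindrical decomposition.

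The existence of a suitable $\lambda$ follows in the classical setting from a dimension count: the bad locus is definable of lower dimension. In our setting the bad locus is a finite Boolean combination of conditions involving at most pairs of cells, hence a set of format $\format$ and degree $\degree$ by axioms $(\s4)$ and $(\s5)$. Sharp definable choice then provides a specific good $\lambda$ with rational entries of controlled size and of format $\format$, degree $\degree$. The lifted homeomorphism $\Phi$ is piecewise affine in the last coordinate on each simplex, and its graph is described by a formula whose atomic predicates involve only the cell-defining functions and the rational vertex coordinates; hence \Cref{prop:fordeg_of_formula_set} yields the required bounds on the format and degree of $\Phi$.

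The main obstacle is the combinatorial bookkeeping required to check that the number of simplices introduced at each stage is $\degree$ and that the format of the resulting objects remains $\format$, independent of $D$. The critical use of full \so-minimality (rather than the weaker variants) is in the good-direction argument: the bad set is assembled as a Boolean combination of $\degree$ many constituent sets each of format $\format$, and only axioms $(\s4)$--$(\s5)$ guarantee that the combination itself has format $\format$ without growth in $D$. Every other step then reduces to a routine application of the sharp tools listed in the first paragraph.
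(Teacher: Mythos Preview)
Your plan is essentially the paper's own: the paper gives no detailed argument and simply says that one follows the classical triangulation proof from \cite{Intro_o_min} (as already carried out for $\rpfaff$ in \cite{Pfaffian_cells}), checking at each step that the formulae involved have format $\format$ and degree $\degree$ using \s CD and \Cref{prop:fordeg_of_formula_set}.

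One small correction to your outline: the triangulation proof in \cite{Intro_o_min} does not use a linear change of coordinates or a ``good direction'' argument---the induction proceeds directly over the cylindrical projection $\pi_{\ell-1}$, so that step (and your appeal to definable choice for a rational $\lambda$, which in any case would not yield rational entries) is extraneous. The rational vertices arise simply because in the lift one places new vertices at, say, the midpoints of the values of the cell-bounding functions over the (inductively rational) base vertices, and one can perturb these to rationals; no good-direction selection is needed.
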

We immediately conclude the following. Note crucially, that \s CD is not needed in this conclusion. Rather, we use the fact that $\Omega^{*}$ has \s CD and that $\Omega\leq\Omega^{*}$.
\begin{thm}[Bound on sum of Betti numbers]
Let $\left(\cS,\Omega\right)$ be a presharp structure. If $X\in\Omega_{\cF,D}$ is compact, then the sum of the Betti numbers of $X$ is bounded by $\degree$.
\end{thm}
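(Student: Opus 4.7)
The plan is to chain together the two main results of the paper: first reduce from the presharp setting to a sharp one enjoying \s CD, then invoke sharp triangulation and count simplices.

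First I would move to a sharp structure. By \Cref{prop:main_result}, there is an FD-filtration $\Omega'$ on $\cS$ such that $\Omega\le\Omega'$ and $(\cS,\Omega')$ is \so-minimal with \s CD. By the definition of reduction (\Cref{defi:reduction_of_filtration}), the set $X$ has $\Omega'$-format $\format$ and $\Omega'$-degree $\degree$. From this point on I would work exclusively inside $(\cS,\Omega')$ and forget the original filtration.

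Second, I would move $X$ inside the standard open box in order to be able to apply the triangulation theorem, whose hypothesis requires $X\subset I^{\ell}$. Since $X$ is compact it lies in some $[-M,M]^{\ell}$, and a coordinate-wise semialgebraic homeomorphism $\R\to I$ of bounded semialgebraic degree (for instance, the inverse of the map $(x-1/2)/(x-x^2):I\to\R$ already used in the proof sketch of \Cref{thm:*omega_effective_cellular_decomposition}, composed with a translation and dilation determined by $M$) produces a closed definable set $X'\subset I^{\ell}$ homeomorphic to $X$. Semialgebraic maps of bounded complexity raise format and degree only by $\format$ and $\degree$, so $X'\in\Omega'_{\format,\degree}$.

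Third, I would apply \Cref{thm:sharp_triangulation} in $(\cS,\Omega')$, with the closed set $X'$ and the single subset $X_1=X'$, to obtain a simplicial complex $K$ of size $\degree$ together with a definable homeomorphism $\Phi:|K|\to X'$. Composing $\Phi$ with the inverse of the rescaling used in the previous step yields a homeomorphism $|K|\to X$, so $X$ and $|K|$ have identical Betti numbers.

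Finally, for any finite simplicial complex the $k$-th Betti number satisfies $b_k(|K|)\le \#\{k\text{-simplices of }K\}$, so
\begin{equation*}
\sum_{k\ge 0} b_k(X) \;=\; \sum_{k\ge 0} b_k(|K|) \;\le\; |K| \;=\; \degree,
\end{equation*}
which is the desired bound. There is no genuine obstacle here: all the real work is absorbed into \Cref{prop:main_result} and \Cref{thm:sharp_triangulation}. The only small technical point is the reduction of the ambient space from $\R^{\ell}$ to $I^{\ell}$ via a semialgebraic homeomorphism with controlled format and degree, which is routine because $X$ is compact.
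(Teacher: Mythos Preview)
Your proof is correct and follows exactly the approach of the paper, which simply cites \Cref{thm:sharp_triangulation} together with \Cref{rem:star_counting}; you have just made explicit the routine steps (passing to $\Omega'$ via \Cref{prop:main_result}, rescaling into $I^\ell$, and bounding Betti numbers by the number of simplices) that the paper leaves implicit.
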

\begin{proof}
Assume without loss of generality that $X\subset[0,1]^{\ell}$. Thus $X$ is in $\Omega^{*}_{\format,\degree}$, and since $\Omega^{*}$ is equivalent to a \so-minimal filtration with \s CD, we are in position to apply \Cref{thm:sharp_triangulation} above, with $Y=[0,1]^{\ell}$ and $X_{1}=X,\;k=1$, obtaining a definable homeomorphism $\Phi:|K|\to[0,1]^{\ell}$ that lies in $\Omega^{*}_{\format,\degree}$. In particular, the inverse image $\Phi^{-1}(X)$, is a semialgebraic set that is the union of at most $\degree$ simplices, and therefore by \cite[Theorem 1]{semiaglerbaic_betti_numbers} the sum of the Betti numbers of $\Phi^{-1}(X)$ is bounded by $\degree$, and so we have the same bound for the sum of the Betti numbers of $X$ as well. 
\end{proof}
\bibliographystyle{plain}
\bibliography{References}
%\printbibliography
\end{document}